\newcommand{\IZ}{{\mathbb{Z}}}
\newcommand{\fA}{{\mathfrak{A}}}
\newcommand{\fS}{{\mathfrak{S}}}
\newcommand{\bG}{{\mathbf{G}}}      %bold
\newcommand{\cO}{{\mathcal{O}}}
\newcommand{\bB}{{\mathbf{B}}}
\newcommand{\bb}{{\mathbf{b}}}
\newcommand{\bc}{{\mathbf{c}}}
\newcommand{\dade}{{\mathbf{D}_k}}
\def\GL{ \text{\rm GL} }
\def\Sp{ \text{\rm Sp} }
\def\SL{ \text{\rm SL} }
\def\PSL{ \text{\rm PSL} }
\def\SU{ \text{\rm SU} }
\def\PSU{ \text{\rm PSU} }
\def\Spin{ \text{\rm Spin} }
\def\Br{ \text{\rm Br} }
\DeclareMathOperator{\End}{End}               %       Endomorphisms
\DeclareMathOperator{\Res}{Res}               %       Restriction
\DeclareMathOperator{\Ind}{Ind}                  %       Induction
\DeclareMathOperator{\Inf}{Inf}                    %       Inflation
\DeclareMathOperator{\Irr}{Irr}
\DeclareMathOperator{\IBr}{IBr}
\DeclareMathOperator{\Stab}{Stab}
\DeclareMathOperator{\Capp}{Cap}
\newcommand{\tw}[1]{{}^#1\!}
\let\lra=\longrightarrow
\let\wt=\widetilde
\let\wh=\widehat
\let\elll=l
\newtheorem{thm}{Theorem}[section]
\newtheorem{lem}[thm]{Lemma}
\newtheorem{cor}[thm]{Corollary}
\newtheorem{prop}[thm]{Proposition}
\newtheorem{quess}[thm]{Questions}
\theoremstyle{theorem}
\theoremstyle{definition}
\newtheorem{defn}[thm]{Definition}
\theoremstyle{remark}
\newcommand{\etalchar}[1]{$^{#1}$}
\begin{document}

%--------------------------------------------------------------------------------------------------------
%------------------------------------------ TITLE MATTERS -------------------------------------
%--------------------------------------------------------------------------------------------------------

\title[On the source algebra equivalence class of blocks with cyclic defect groups, I]
{On the source algebra equivalence class of blocks with cyclic defect groups, I}

\date{July 29, 2022}

\author{
Gerhard Hiss and  Caroline Lassueur
}
\address{{\sc Gerhard Hiss},  Lehrstuhl f\"ur Algebra und Zahlentheorie, RWTH Aachen 
University, 52056 Aachen, Germany.}
\email{gerhard.hiss@math.rwth-aachen.de}
\address{{\sc Caroline Lassueur}, FB Mathematik, TU Kaiserslautern, Postfach 3049, 
67653 Kaiserslautern, Germany.}
\email{lassueur@mathematik.uni-kl.de}

\keywords{Blocks with cyclic defect groups, source algebras, endo-permutation modules}

\subjclass[2010]{Primary 20C20, 20C33; Secondary 20C30, 20C34.}

\begin{abstract} 
We investigate the source algebra class of a $p$-block with cyclic defect groups 
of the group algebra of a finite group.
By the work of Linckelmann this class is parametrized by the Brauer tree
of the block together with a sign function on its vertices and an endo-permutation module 
of a defect group. 
We prove that this endo-permutation module can be read off from the character 
table of the group. We also prove that this module is trivial for all cyclic $p$-blocks of 
quasisimple groups with a simple quotient which is a sporadic group, an alternating group,
a group of Lie type in defining characteristic, or a group of Lie type in cross-characteristic   
for which the prime~$p$ is large enough in a certain sense. 
\end{abstract}

\thanks{The second author gratefully acknowledge financial support by SFB TRR 195.}

\maketitle

%--------------------------------------------------------------------------------------------------------
%------------------------------------------ Heading of pages--------------------------------------
%-------------------------------------------------------------------------------------------------------

\pagestyle{myheadings}
\markboth{On the source algebra equivalence class of blocks with cyclic defect groups, I}
{On the source algebra equivalence class of blocks with cyclic defect groups, I}

%--------------------------------------------------------------------------------------------------------
%------------------------------------------ INTRODUCTION--------------------------------------
%-------------------------------------------------------------------------------------------------------
\vspace{6mm}
\section{Introduction}

Consider a finite group $G$, an algebraically closed field $k$ of characteristic 
$p>0$, and a $p$-block~$\bB$ of~$kG$ with a non-trivial cyclic defect group~$D$.
Let~$D_1$ denote the unique subgroup of~$D$ of order~$p$ and let~$\bb$ be the 
Brauer correspondent of~$\bB$ in~$N_G(D_1)$. For further terminology used below, 
we refer to Section~\ref{sec:notat}.

By Linckelmann's results \cite[Theorem~2.7]{Linck96}, the block~$\bB$ is 
determined up to source algebra equivalence by three invariants: 

{\rm (a)}  the Brauer tree $\sigma(\bB)$ of~$\bB$ with its planar embedding;

{\rm (b)}  the type function on $\sigma(\bB)$, which associates one of the 
        signs~$+$ or~$-$ to the vertices of $\sigma(\bB)$ in such a way that 
        adjacent vertices have different signs; and
        
{\rm (c)}  an indecomposable capped endo-permutation $kD$-module~$W(\bB)$ 
  isomorphic, by definition, to a $kD$-source of the simple $\bb$-mod\-ul\-es.

We observe that~$D_1$ acts trivially on~$W(\bB)$ and recall that (a) determines 
the Morita equivalence class of~$\bB$, whereas it is necessary to add 
parameters (b) and (c) to determine  the source algebra of~$\bB$ up to isomorphism
of interior $D$-algebras.

In~\cite[Section~$5$]{HL19}, the authors 
explicitly determined the structure of the indecomposable 
trivial source modules belonging to a cyclic $p$-block~$\bB$. This involves 
all three invariants (a), (b) and (c) above, and in particular
the module~$W( \bB )$ in an essential way. The formulae are particularly simple provided  
$\bB$ is uniserial or $W( \bB ) \cong k$.

It is known, nevertheless, that all indecomposable capped endo-permutation 
$kD$-modules on which $D_1$ acts 
trivially arise as a source of the simple $\bb$-modules for some cyclic 
block $\bB$; this follows from examples given by Dade in \cite{Dad66} 
(reinterpreted in terms of Linckelmann's definition of $W(\bB)$). Dade's 
construction was later extended and generalized by Mazza in~\cite{Mazza}. 
However, all the examples provided by the methods of~\cite{Dad66} and 
Mazza~\cite{Mazza} arise from $p$-blocks of $p$-solvable groups, and such blocks
are uniserial as $k$-algebras (i.e.\ all their projective indecomposable
modules are uniserial).

It is therefore natural to ask whether it is possible to describe all source
algebra equivalence classes of cyclic blocks arising in finite groups in function 
of the invariants (a), (b) and (c), and in particular which capped endo-permutation 
$kD$-modules can occur as~$W(\bB)$ for cyclic blocks~$\bB$ which are not uniserial. 
These questions are the main motivation behind the present article. 
More precisely, we are going to address the following questions.

\begin{quess}{\ }
\label{QuestionA}

{\rm (a)} Can we determine~$W(\bB)$ from the character table of~$G$?

{\rm (b)}  Can we determine~$W(\bB)$ for all non-uniserial cyclic blocks $\bB$ of 
              finite groups?
              
{\rm (c)} Can we determine~$W(\bB)$ for all cyclic blocks $\bB$ of quasisimple 
              groups?
              
{\rm (d)}  Can we reduce Question~{\rm (b)} to Question~{\rm (c)}?

\end{quess}
In Part~I of our paper, in Theorem~\ref{thm:readingWfromIrr(B)}, 
we give a positive answer to 
Question~\ref{QuestionA}(a) for odd primes~$p$. 
We also provide further criteria for $W(\bB)$ to be trivial. 
Then, in Section~\ref{sec:red} we  work towards reduction
theorems. In particular, we prove the analogues of Feit's reduction theorems
\cite{Fei84} for the determination of all Brauer trees. Our results, however,
do not give a complete reduction of Question~(b) to quasisimple groups.
Finally, in Section~\ref{sec:W(B)quasisimple}, 
we start the classification of the modules $W( \bB )$ (up to isomorphism) 
for the cyclic $p$-blocks of the quasisimple groups. We treat the quasisimple 
groups with a simple quotient which is a sporadic group, an alternating 
group, a group of Lie type in defining characteristic, or
a finite group of Lie type for which~$p$ is large in a certain sense. In all 
these cases, it turns out that $W( \bB ) \cong k$. However, this is not true in
general, although the possibilities for $W( \bB )$ are rather restricted.
The complete classification is fairly tedious and is postponed to 
Part~II of our paper.

Finally, we notice that the methods used in Section~\ref{sec:W(B)quasisimple} 
are of group-theoretical nature and show that in the treated cases all the 
centralizers of all the non-trivial $p$-elements lying in a given cyclic defect 
group are equal; this result is of independent interest.

\section{Notation and preliminaries}\label{sec:notat}

\subsection{General Notation}
Throughout this paper we let $p$ be a prime number and $G$ be a finite group of 
order divisible by~$p$. We let $(K,\cO,k)$ be a $p$-modular system, where~$\cO$ 
is a complete discrete valuation ring with field of fractions $K$ of 
characteristic zero and algebraically closed residue field $k$ of 
characteristic~$p$. We also assume that~$K$ is large enough in the sense 
that~$K$ is a splitting field for all subgroups of~$G$.
\par
Whenever $D$ denotes a cyclic $p$-subgroup of $G$ of order $p^{\elll}$ 
where $\elll$ is a positive integer, then for each $0\leq i\leq \elll$ we denote 
by~$D_{i}$ the cyclic subgroup of $D$ of order $p^{i}$ and we set $N_i:=N_G(D_i)$.
\par
Unless otherwise stated, $kG$-modules are assumed to be finitely generated left $
kG$-modules and by a $p$-block of~$G$ we mean a block of~$kG$. Given a 
subgroup $H\leq G$, we let $k$  denote the trivial $kH$-module, we write 
$\Res^G_H(M)$ for the restriction of the $kG$-module $M$ to~$H$, and 
$\Ind_H^G(N)$ for the induction of the $kH$-module $N$ to $G$. Given a normal 
subgroup $U$ of $G$, we write $\Inf_{G/U}^{G}(M)$ for the inflation of the 
$k[G/U]$-module $M$ to $G$. An analogous notation is used for characters. 
We let $\Omega$ denote the Heller operator. The canonical homomorphism 
$\pi:\cO\lra\cO/J(\cO)=k$ induces a bijection between the blocks of $\cO G$ 
and the blocks of $kG$; we simply denote by $\Irr_{K}(\bB)$ and $\IBr_{p}(\bB)$  
the set of irreducible $K$-characters, respectively the set of irreducible 
Brauer characters, of the preimage of a $p$-block~$\bB$ of~$G$ under $\pi$. 
\par
{We also recall that two $p$-blocks $\bB_{1}$ and $\bB_{2}$
of finite groups with a common defect group $D$ are called 
\emph{source algebra equivalent} (or \emph{Puig equivalent}, 
or \emph{splendidly Morita equivalent})  if there is an isomorphism of interior 
$D$-algebras between a source algebra of $\bB_{1}$ and a source 
algebra of $\bB_{2}$. Equivalently, a source algebra equivalence between 
$\bB_{1}$ and $\bB_{2}$ is a Morita equivalence between $\bB_{1}$ and $\bB_{2}$ 
which is induced by a pair of bimodules with trivial sources; 
see~\cite[Theorem 4.1]{Linck94}. 
}

\subsection{Radical $p$-subgroups}
By~$O_p(G)$ we denote the largest normal $p$-subgroup of~$G$. A $p$-subgroup 
$P \leq G$ is called \textit{radical}, if $O_{p}( N_G(P) ) = P$. A defect group 
of a $p$-block of~$G$ is a radical $p$-subgroup. The following lemmas are 
elementary and well-known, hence their proofs are 
omitted. Some of these results will only be used in Part~II of our paper.

\begin{lem}
\label{AbelianRadicalSubgroup}
Let $P \leq G$ be an abelian radical $p$-subgroup. Then $O_p( C_G( P ) ) = P = 
O_p( Z( C_G( P ) ) )$. Moreover, $N_G( P ) = N_G( C_G( P ) )$.
\end{lem}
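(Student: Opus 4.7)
The plan is to establish the three equalities in sequence, using only the defining property $O_p(N_G(P))=P$ of a radical subgroup and the abelianness of $P$.

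First I would show $P = O_p(C_G(P))$. Since $P$ is abelian, $P \leq C_G(P)$, and in fact $P$ is normal in $C_G(P)$ (any element of $C_G(P)$ centralises, hence normalises, $P$), so the $p$-subgroup $P$ is contained in $O_p(C_G(P))$. For the reverse inclusion, note that $C_G(P) \trianglelefteq N_G(P)$ and $O_p(C_G(P))$ is characteristic in $C_G(P)$, so $O_p(C_G(P))$ is a normal $p$-subgroup of $N_G(P)$. Therefore $O_p(C_G(P)) \leq O_p(N_G(P)) = P$, where the last equality uses that $P$ is radical.

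Next, for $P = O_p(Z(C_G(P)))$, the inclusion $P \leq Z(C_G(P))$ holds because $P$ is abelian and every element of $C_G(P)$ commutes with every element of $P$, giving $P \leq O_p(Z(C_G(P)))$. Conversely, $O_p(Z(C_G(P)))$ is a characteristic $p$-subgroup of the characteristic subgroup $Z(C_G(P))$ of $C_G(P)$, hence a normal $p$-subgroup of $C_G(P)$, so it is contained in $O_p(C_G(P)) = P$.

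For the final claim $N_G(P) = N_G(C_G(P))$, one inclusion is formal: conjugation by $g \in N_G(P)$ sends $C_G(P)$ to $C_G(gPg^{-1}) = C_G(P)$. For the other, if $g \in N_G(C_G(P))$ then $gPg^{-1} = g\,O_p(C_G(P))\,g^{-1} = O_p(g C_G(P) g^{-1}) = O_p(C_G(P)) = P$, using the functoriality of $O_p$ under conjugation together with the equality $P = O_p(C_G(P))$ just established. There is no real obstacle here; the only step that requires a moment's thought is extracting $P$ as a \emph{characteristic} invariant of $C_G(P)$, which is precisely what the first equality accomplishes and makes the normaliser identity automatic.
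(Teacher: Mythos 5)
Your proof is correct and complete. The paper states that this lemma is elementary and well-known and omits the proof entirely, so there is no paper argument to compare against; your argument is the natural one, resting on (i) $P\leq C_G(P)$ and $P\unlhd C_G(P)$ from abelianness, (ii) $C_G(P)\unlhd N_G(P)$ together with $O_p(N_G(P))=P$ to get the reverse inclusions, and (iii) the first equality exhibiting $P$ as a characteristic invariant of $C_G(P)$ to deduce the normalizer identity. All steps check out.
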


\begin{lem}
\label{CentralExtensionEtc}
Let $Z \leq Z(G)$ and let $P$ be a $p$-subgroup of~$G$. For ${H \leq G}$, we 
write~$\bar{H}$ for the image of~$H$ under the canonical epimorphism 
$G \rightarrow G/Z$. Then the following hold.

{\rm (a)} If $p \nmid |Z|$, then $C_G( P )/Z = C_{\bar{G}}( \bar{P} )$.
If $O_p(Z) \leq P$, then $N_G(P)/Z = N_{\bar{G}}( \bar{P} )$.

{\rm (b)} Suppose that $N_G(P)/Z = N_{\bar{G}}( \bar{P} )$. Then~$P$
is a radical $p$-subgroup of~$G$ if and only if~$\bar{P}$ is a radical
$p$-subgroup of~$\bar{G}$.

{\rm (c)} Suppose that $N \unlhd G$ with $P \leq N$ such that 
$p \nmid [G\colon\!N]$.
Then~$P$ is a radical $p$-subgroup of~$N$ if and only if~$P$ is a radical 
$p$-subgroup of~$G$.
\end{lem}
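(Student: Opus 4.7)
The plan is to handle the three parts in turn, with~(b) as the main technical step. For~(a), the forward inclusions are immediate in each statement. For the reverse inclusion on centralizers, I would apply the classical commutator--homomorphism trick: given $\bar{g}\in C_{\bar{G}}(\bar{P})$, the map $\varphi\colon P\to Z$, $x\mapsto[g,x]$, is a homomorphism, because centrality of $[g,x]$ in $G$ collapses the identity $[g,xy]=[g,y][g,x]^y$ to $[g,x][g,y]$; since $P$ is a $p$-group and $p\nmid|Z|$, $\varphi$ is trivial and $g\in C_G(P)$. For normalizers, $\bar{g}\in N_{\bar{G}}(\bar{P})$ means $gPg^{-1}\le PZ$; under $O_p(Z)\le P$, the $p$-part of $PZ$ is $P\cdot O_p(Z)=P$, so $P$ is the unique Sylow $p$-subgroup of $PZ$, forcing $gPg^{-1}\le P$ and hence equality by order.

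For~(b), put $N:=N_G(P)$, so that $N/Z=N_{\bar{G}}(\bar{P})$ by hypothesis. I would compare $O_p(N)$ with the preimage $M\le N$ of $O_p(N/Z)$. If $P=O_p(N)$, then $O_p(Z)\le O_p(N)=P$ is automatic (since $O_p(Z)$ is a normal $p$-subgroup of $N$), and $M\unlhd N$ contains $PZ$ with $M/Z$ a $p$-group. From $P\unlhd M$ and the fact that $O_p(M)$ is characteristic in $M\unlhd N$, hence normal in $N$, one obtains $O_p(M)\le P$, and then $O_p(M)=P$, whence $M=PZ$ and $O_p(N/Z)=\bar{P}$. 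Conversely, assuming $\bar{P}=O_p(N/Z)$ while carrying along $O_p(Z)\le P$, the normal $p$-subgroup $O_p(N)$ of $N$ projects into $\bar{P}$, so $O_p(N)\le PZ$; since $P$ is the full $p$-part of $PZ$, the inclusion tightens to $O_p(N)\le P$ and equality follows.

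For~(c), the crux is that $N_N(P)=N_G(P)\cap N$ has index in $N_G(P)$ dividing $[G\colon\!N]$, hence $p'$-index. Any normal $p$-subgroup of $N_G(P)$ lies in every subgroup of $p'$-index (intersecting such a subgroup with a $p$-group would otherwise leave $p'$-index in a $p$-group), so $O_p(N_G(P))\le N_N(P)$; and $N\unlhd G$ makes $N_N(P)$ itself normal in $N_G(P)$, so characteristic subgroups of $N_N(P)$ are normal in $N_G(P)$. These two observations yield $O_p(N_N(P))=O_p(N_G(P))$, and the equivalence of radicality in $N$ and in $G$ is immediate. The chief obstacle is~(b): tracking the interplay between $O_p(Z)$ and $P$ throughout, so that $P$ remains the Sylow $p$-subgroup of $PZ$, is what makes the direct argument go through; without this, spurious central $p$-elements could enlarge $O_p(N)$ beyond $P$.
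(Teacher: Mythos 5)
Your proof of parts~(a) and~(c) is correct and complete. In~(a) the commutator homomorphism $\varphi\colon P\to Z$ and the observation that $P$ is the unique Sylow $p$-subgroup of $PZ$ when $O_p(Z)\le P$ are exactly the right tools. In~(c), the two observations --- a normal $p$-subgroup lies in every subgroup of $p'$-index, and $N_N(P)=N_G(P)\cap N\unlhd N_G(P)$ --- combine correctly to give $O_p(N_N(P))=O_p(N_G(P))$.

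In~(b) your argument is correct provided one has $O_p(Z)\le P$, and you are right that this hypothesis cannot be dispensed with. In the forward direction it is free, since a normal $p$-subgroup of $N:=N_G(P)$ such as $O_p(Z)$ must lie in $O_p(N)=P$. But in the converse direction it is a genuine extra assumption: as stated, the hypothesis $N_G(P)/Z=N_{\bar G}(\bar P)$ does not force $O_p(Z)\le P$, and without it the implication fails. For instance, with $G=C_p\times C_p$, $Z=C_p\times 1$ and $P=1\times C_p$, one has $N_G(P)/Z=\bar G=N_{\bar G}(\bar P)$ and $\bar P=\bar G$ is radical in $\bar G$, yet $O_p(N_G(P))=G\ne P$. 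Thus~(b) should be read as carrying the condition $O_p(Z)\le P$ from~(a), which is how the lemma is applied in the paper; in general one only gets $O_p(N)=P\,O_p(Z)$. Finally, the step ``$O_p(M)=P$, whence $M=PZ$'' in your forward argument, while true, deserves a line: since $Z\le Z(M)$ and $M/Z$ is a $p$-group, $M$ is nilpotent, so $M=O_p(M)\times O_{p'}(M)=P\times O_{p'}(Z)=PZ$ (using $O_p(Z)\le P$). The paper omits the proof as elementary, so no comparison of routes is possible, but your argument is the natural one.
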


\subsection{Endo-permutation modules}
Let $P$ be a finite $p$-group. A~$kP$-module $W$ is called 
\emph{endo-permutation} 
if $\End_k(W)$, endowed with the $kP$-module structure given by the conjugation 
action of $G$, is a permutation $kP$-module. Such an endo-permutation 
$kP$-module is called \emph{capped} if it has an indecomposable direct summand 
with vertex $P$. In this case, $W\cong \Capp(W)^{\oplus m}\oplus X$, where 
$m\geq1$ is an integer, all the indecomposable direct summands of $X$ have 
vertices strictly contained in $P$, and $\Capp(W)$, called the \emph{cap} of 
$W$, is, up to isomorphism, the unique indecomposable direct summand of $W$ 
with vertex $P$. The \emph{Dade group} of $P$, denoted $\dade(P)$, is the set of 
isomorphism classes of indecomposable capped endo-permutation $kP$-modules with 
composition law induced by the tensor product over $k$, i.e.\ 
\[
[W_{1}]+[W_{2}]:=[\Capp(W_{1}\otimes_k W_{2})]\,.
\]
If $Q\leq P$, then we denote by $\Omega_{P/Q}$ the relative Heller operator with 
respect to $Q$. 
By definition, if~$W$ is a $kP$-module, then $\Omega_{P/Q}(W)$ is the kernel of 
a relative $Q$-projective cover of~$W$. Moreover, $\Omega_{P/Q}(W)$ is 
an endo-permutation $kP$-module if and only if~$W$ is. The usual Heller 
operator is $\Omega=\Omega_{P/\{1\}}$. 
For a detailed introduction to endo-permutation modules we refer to the survey 
\cite[\S3-\S4]{TheSurvey} and the references therein.

\subsection{Blocks with cyclic defect groups}\label{ssec:cyclicBlocks}
From now on, if not otherwise specified, we denote by $\bB$ a $p$-block of $kG$ 
with a cyclic defect group $D\cong C_{p^\elll}$ with $\elll\geq 1$, and we let 
$\sigma(\bB)$ denote its Brauer tree, understood with its planar embedding.
We let~$\bb$ be the Brauer correspondent of $\bB$ in $N_{G}(D_{1})$ and~$\bc$ be 
a $p$-block of $C_{G}(D_1)$ covered by~$\bb$. Then $D$ is also a defect group of 
$\bb$ and~$\bc$. The inertial index of $\bb$ is equal to the inertial index 
of~$\bB$, whereas $\bc$ is nilpotent, that is, of inertial index one and has a 
unique non-exceptional character. The Brauer tree $\sigma(\bb)$ is a star with 
exceptional vertex at its center (if any). In particular, both $\bb$ and $\bc$ 
are uniserial blocks, in the sense that their projective indecomposable modules
are uniserial. Indeed, the structure of these modules is encoded in the Brauer 
tree; see \cite[Section~$17$]{AlperinBook}. The simple $\bb$-modules and the 
simple $\bc$-module have a common $kD$-source, which we denote by $W(\bB)$. 
Obviously, $W(\bB)=W(\bb)=W(\bc)$ and this is always an indecomposable capped 
endo-permutation $kD$-module, on which $D_{1}$ acts trivially, 
see~\cite[Theorem~11.1.5]{LinckBook}.

\section{On the determination of $W(\bB)$}

Throughout this section, we assume that~$p$ is a prime and $\bB$ is a 
$p$-block of $kG$ with a cyclic defect group $D\cong C_{p^\elll}$ 
where~$\elll\geq 1$, and associated capped endo-permutation $kD$-module~$W(\bB)$.
For further notation, see Section~\ref{sec:notat}.

\subsection{Recognizing $W(\bB)$ from the character table}
Here we assume that~$p$ is odd.
To begin with, we need to describe the values of the characters of the lifts to $\cO$ of
the indecomposable capped endo-permutation $kD$-modules. The latter modules were 
classified by Dade in \cite{DadeI&II}. Namely, 
\begin{equation}\label{eq:dadegroupcyclic}
\dade(D)=\langle \Omega_{D/D_i}(k) \mid 0\leq i \leq \elll-1 \rangle 
\cong 
(\IZ/2)^{\elll}\,.
\end{equation}
In other words, the indecomposable capped endo-permutation $kD$-modules are, up 
to isomorphism, precisely the modules of the form
\[
W_D(\alpha_0,\ldots,\alpha_{\elll-1})
:=\Omega_{D/D_0}^{\alpha_0}\circ\Omega_{D/D_1}^{\alpha_1}\circ\cdots
\circ\Omega_{D/D_{\elll-1}}^{\alpha_{\elll-1}}(k)
\]
with  $\alpha_i\in\{0,1\}$ for each $0\leq i\leq \elll-1$. 
\par
Furthermore, each indecomposable capped endo-permutation $kD$-module $W$ lifts 
in a unique way to an endo-permutation $\cO D$-lattice~$\widetilde{W}$ such that 
the determinant of the underlying representation is the trivial character. It is 
then said that $\widetilde{W}$ has \emph{determinant one}. Let~$\rho_{W}$ denote 
the $K$-character afforded by~$\widetilde{W}$. Then, our assumptions 
on $(K,\cO,k)$ ensure that for every $u\in P$, we have 
$\rho_{W}(u)\in\mathbb{Z}\setminus\{0\}$ and we define 
\[
 \omega_{W}(u):=
 \begin{cases}
 +1 \,\text{ if }\rho_{W}(u)>0\,,\\
 -1\,\text{ if }\rho_{W}(u)<0\,.
 \end{cases}
\]
See \cite[\S28 and \S52]{ThevenazBook} for details.  
 
\begin{lem}\label{lem:Omegaomega}%
Let $[W]\in\dade(D)$ and set $L:=\Omega_{D/D_{0}}(W)$. Then 
$\omega_{L}(u)=-\omega_{W}(u)$ for every $u\in D\setminus\{1\}$.
\end{lem}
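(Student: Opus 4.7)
The plan is to use the projective cover of $W$ to produce a short exact sequence of $\cO D$-lattices, take characters, and exploit the vanishing of the regular character off the identity.

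First, I would observe that $D_0 = \{1\}$, so $\Omega_{D/D_0}$ is just the usual Heller operator $\Omega$ and $L = \Omega(W)$. Let $m$ denote the dimension of the head of $W$. Since $D$ is a $p$-group, $kD$ is the unique indecomposable projective $kD$-module, so the projective cover of $W$ is $kD^{\oplus m}$, and we have a short exact sequence
\[
0 \longrightarrow L \longrightarrow kD^{\oplus m} \longrightarrow W \longrightarrow 0.
\]
Because $\cO D$ is a projective $\cO D$-lattice lifting $kD$, and because $\widetilde{W}$ is an $\cO D$-lattice lifting $W$, this sequence lifts to a short exact sequence of $\cO D$-lattices
\[
0 \longrightarrow M \longrightarrow \cO D^{\oplus m} \longrightarrow \widetilde{W} \longrightarrow 0
\]
where $M$ is some $\cO D$-lift of $L$.

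Next, I would verify that $M$ is in fact the \emph{determinant-one} lift of $L$, i.e.\ that $M \cong \widetilde{L}$. Determinants are multiplicative in short exact sequences of lattices, so the determinant character of $M$ equals the determinant of $\cO D^{\oplus m}$ times the inverse of the determinant of $\widetilde{W}$. The latter is trivial by assumption on $\widetilde{W}$. For the former, since $D$ is cyclic of odd order $p^\elll$, the determinant of $u \in D$ acting on the regular representation $\cO D$ is the product over all linear characters $\chi$ of $D$ of $\chi(u)$; writing $u = u_0^a$ for a generator $u_0$ and $\zeta$ a primitive $p^\elll$-th root of unity, this product equals $\zeta^{a \cdot p^\elll(p^\elll-1)/2} = 1$ because $p$ is odd forces $(p^\elll-1)/2 \in \IZ$ and the exponent is a multiple of $p^\elll$. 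Therefore $\det(\cO D^{\oplus m})$ is trivial, and so is $\det(M)$. By uniqueness of the determinant-one lift (see \cite[\S28]{ThevenazBook}), we conclude $M \cong \widetilde{L}$.

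Finally, taking $K$-characters in the lifted exact sequence gives
\[
m \cdot \chi_{\mathrm{reg}} = \rho_W + \rho_L,
\]
where $\chi_{\mathrm{reg}}$ denotes the regular character of $D$. For any $u \in D \setminus \{1\}$, $\chi_{\mathrm{reg}}(u) = 0$, so $\rho_L(u) = -\rho_W(u)$. Both values are nonzero integers by the discussion preceding the lemma, so $\omega_L(u) = -\omega_W(u)$, as required.

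The main obstacle is the verification step that the lift $M$ sitting in the exact sequence has determinant one; once the parity computation with the product of linear characters is in hand, everything else is immediate from the additivity of characters on short exact sequences of lattices.
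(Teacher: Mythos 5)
Your proof is correct and follows essentially the same route as the paper: lift the projective-cover short exact sequence to $\cO$, verify the kernel is the determinant-one lift $\widetilde{L}$, and then use that the regular character vanishes off the identity. The one minor variation is that you verify $\det(\cO D)=1$ by an explicit product-of-linear-characters parity computation, whereas the paper cites the more general fact that any permutation $\cO P$-lattice has determinant one when $p$ is odd (\cite[Lemma~3.3(a)]{LT19}); also note that for $D$ cyclic your $m$ is automatically $1$, since $kD$ is uniserial and $W$ indecomposable, so the extra generality is unnecessary but harmless.
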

\begin{proof}
Recall that $\Omega_{D/D_{0}}=\Omega$ is simply the Heller operator. Since $D$ 
is cyclic, the projective cover of $W$ is isomorphic to~$kD$ and we have a short 
exact sequence
\[
0\lra L\lra kD\lra W\lra 0\,.
\]
Then, $\cO D$ is a projective cover of $\widetilde{W}$. Now, as  $p$ is odd, any 
permutation $\cO P$-lattice has determinant one 
(see e.g.~\cite[Lemma~3.3(a)]{LT19}). Therefore, the kernel of the projective 
cover of $\widetilde{W}$ also has determinant one, and so we have a short exact 
sequence
\[
0\lra \widetilde{L}\lra \cO D\lra \widetilde{W}\lra 0\,.
\]
As projective characters vanish at $p$-singular elements, it follows that 
$\rho_{W}(u)+\rho_{L}(u)=0$ for every $u\in D\setminus\{1\}$. The claim follows.
\end{proof}

\begin{lem}\label{lem:bijectionsigns}%
For each $1\leq i\leq \elll$ let $u_{i}$ be a generator of the subgroup~$D_{i}$ 
of~$D$. Then, the map
\begin{center}
\begin{tabular}{cccl}
       $\Psi_{\elll}$\,:            &   $\mathbf{D}_{k}(D)$      & $\lra$ &    $ \{\pm1\}^{\elll}$     \\
                         &   $[W]$      & $\mapsto$     &   $(\omega_{W}(u_{1}),\ldots,\omega_{W}(u_{\elll}))$   
\end{tabular}
\end{center}
is bijective and  independent of the choice of the generators 
$u_{1},\ldots,u_{\elll}$.  Furthermore, if $W=W_D(\alpha_0,\ldots,\alpha_{\elll-1})$ and 
$1\leq i\leq \elll$, then 
\[
\omega_{W}(u_{i})
=
\begin{cases}
+1 \,\text{ if } \sum_{j=0}^{i-1}\alpha_{j}\equiv 0\pmod{2},  \\
-1 \,\text{ if }  \sum_{j=0}^{i-1}\alpha_{j} \equiv 1\pmod{2}.
\end{cases}
\]
\end{lem}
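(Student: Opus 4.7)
The plan is to prove the explicit formula for $\omega_W(u_i)$ first, since both the bijectivity of $\Psi_\ell$ and the independence of the choice of generators follow from it. For the independence, recall that the determinant-one lift $\widetilde W$ has $\IZ$-valued character~$\rho_W$ on~$D$, as noted just before the lemma, so $\rho_W$ is invariant under $\mathrm{Gal}(K/\mathbb Q)$; any two generators of $D_i$ lie in the same Galois orbit, so $\omega_W$ takes a common value on them, confirming that $\Psi_\ell$ does not depend on the choice of $u_1,\ldots,u_\ell$.

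To compute $\omega_W(u_i)$ on the Dade-group generators $W = \Omega_{D/D_r}(k)$ of~(\ref{eq:dadegroupcyclic}), I would mimic the proof of Lemma~\ref{lem:Omegaomega}, starting from $0 \to \Omega_{D/D_r}(k) \to k[D/D_r] \to k \to 0$. The modules $\cO[D/D_r]$ and $\cO$ are permutation lattices, hence of determinant one for odd~$p$, so the lifted sequence $0 \to \widetilde{\Omega_{D/D_r}(k)} \to \cO[D/D_r] \to \cO \to 0$ exhibits the determinant-one lift of the kernel. A fixed-point count for $u_i$ on $D/D_r$ then yields
\[
\rho_{\Omega_{D/D_r}(k)}(u_i) \;=\; |(D/D_r)^{u_i}|-1 \;=\; \begin{cases} p^{\ell-r}-1, & \text{if } D_i \leq D_r, \\ -1, & \text{otherwise,}\end{cases}
\]
so $\omega_{\Omega_{D/D_r}(k)}(u_i) = +1$ if $i \leq r$ and $-1$ if $i > r$, matching the claimed formula in the case of a single non-zero~$\alpha$.

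For general $W = W_D(\alpha_0,\ldots,\alpha_{\ell-1})$, the formula will follow by induction on $\sum_j \alpha_j$ provided one knows that, for every capped endo-permutation $kD$-module~$V$ and every $r \in \{0,\ldots,\ell-1\}$,
\[
\omega_{\Omega_{D/D_r}(V)}(u_i) \;=\; \omega_{\Omega_{D/D_r}(k)}(u_i)\cdot\omega_V(u_i).
\]
For $i > r$, this is immediate from the argument of Lemma~\ref{lem:Omegaomega}: in the short exact sequence $0 \to \Omega_{D/D_r}(V) \to X_V \to V \to 0$ arising from a relative $D_r$-projective cover, the middle term is relatively $D_r$-projective and so has vanishing character on $u_i \notin D_r$, forcing opposite signs on the outer terms at $u_i$. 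The \textbf{main obstacle} is the case $i \leq r$: the middle-term character no longer vanishes, and the structure of $X_V$ for a general capped endo-permutation $V$ is less explicit than the permutation module $k[D/D_r]$ one gets for $V=k$. The cleanest resolution is to recast the identity as multiplicativity of $[V]\mapsto\omega_V(u_i)$ as a homomorphism $\dade(D) \to \{\pm 1\}$: lift the decomposition $\Omega_{D/D_r}(k)\otimes V \cong \Omega_{D/D_r}(V)\oplus X'$ (with $X'$ of vertex strictly contained in~$D$) to $\cO$ by idempotent-lifting, and invoke uniqueness of determinant-one lifts---which holds because, for odd~$p$, every capped endo-permutation $kD$-module has dimension coprime to~$p$---together with the character theory of endo-permutation lattices (cf.~\cite[\S28, \S52]{ThevenazBook}) to control the contributions of both summands to the character at $u_i$.

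Finally, the bijectivity of $\Psi_\ell$ is immediate from the formula: via the isomorphism $\dade(D) \cong (\IZ/2)^\ell$ of~(\ref{eq:dadegroupcyclic}), $\Psi_\ell$ corresponds to the $\mathbb F_2$-linear map $(\alpha_0,\ldots,\alpha_{\ell-1}) \mapsto \bigl(\sum_{j=0}^{i-1}\alpha_j \bmod 2\bigr)_{i=1}^\ell$, whose matrix is lower unitriangular with ones on and below the diagonal, hence invertible, giving a bijection between sets of equal cardinality~$2^\ell$.
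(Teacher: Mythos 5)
Your Galois-invariance argument for the independence of generators is correct (since $\rho_W$ is $\mathbb{Z}$-valued it is constant on each set of generators of $D_i$), and the fixed-point count $\rho_{k[D/D_r]}(u_i)=|(D/D_r)^{u_i}|$ correctly yields $\rho_{\Omega_{D/D_r}(k)}(u_i)=p^{\ell-r}-1$ for $i\leq r$ and $=-1$ for $i>r$, matching the claimed formula on the generators of $\dade(D)$. You also correctly flag where the real difficulty lies: passing to a general $W_D(\alpha_0,\ldots,\alpha_{\ell-1})$ requires, in effect, that $[W]\mapsto\omega_W(u_i)$ be a homomorphism $\dade(D)\to\{\pm 1\}$, and for $i<\ell$ this is not immediate.

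The fix you sketch does not close this gap. After decomposing $W_1\otimes W_2\cong\Capp(W_1\otimes W_2)^{\oplus m}\oplus X$ with $X$ of smaller vertex, the character of $X$ at $u_i$ need not vanish when $i<\ell$: $u_i$ has order $p^i<p^\ell$ and may lie in the vertex of summands of $X$. Concretely, take $D=C_{p^2}$ and $W_1=W_2=\Omega_{D/D_1}(k)$, so $\Capp(W_1\otimes W_2)\cong k$; then $\rho_{W_1}(u_1)\rho_{W_2}(u_1)=(p-1)^2$ but $\rho_k(u_1)=1$, so the non-cap part contributes $(p-1)^2-m\neq 0$ at $u_1$. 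Thus invoking ``uniqueness of determinant-one lifts'' and promising to ``control the contributions'' is not yet an argument; the \emph{signs} agree here, but that is exactly what must be proved. (One might try to repair this by first restricting $W$ to $D_i$, since $u_i$ is a top element of the cyclic group $D_i$ and proper-vertex summands then do vanish on it, but one must still identify the determinant-one lift of the cap inside the lifted tensor product, and this needs care.) The paper sidesteps multiplicativity entirely: using Th\'evenaz's decomposition $\dade(D)=\Inf_{D/D_1}^D\bigl(\dade(D/D_1)\bigr)\sqcup\Omega_{D/D_0}\bigl(\Inf_{D/D_1}^D(\dade(D/D_1))\bigr)$ it writes $W\cong\Omega_{D/D_0}^{\alpha_0}\bigl(\Inf_{D/D_1}^D(V)\bigr)$, reads $\omega_W$ off $\omega_V$ via the character identity for inflation together with Lemma~\ref{lem:Omegaomega} (which concerns only the ordinary Heller operator, where the middle term $kD$ is free and its character genuinely vanishes at nontrivial $p$-elements), and then inducts on $\ell$, obtaining the formula, the independence of generators, and the bijectivity of $\Psi_\ell$ in one stroke.
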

\begin{proof}
We proceed by induction on $\elll$. So first assume that $\elll=1$. Then 
$D\cong C_{p}$ and (\ref{eq:dadegroupcyclic}) yields 
$\dade(D)=\{[k],[\Omega_{D/D_{0}}(k)]\}$, where $k=W_{D}(\alpha_{0})$ with $\alpha_{0}=0$ 
and $\Omega_{D/D_{0}}(k)=W_{D}(\alpha_{0})$ with $\alpha_{0}=1$. Clearly,  
$\rho_{k}=1_{D}$ is the trivial character and so $\omega_{k}(u)=1$ for each 
$u\in D\setminus\{1\}$, whereas for $L:=\Omega_{D/D_{0}}(k)$ we have  
$\omega_{L}(u)=-1$  for each $u\in D\setminus\{1\}$ by 
Lemma~\ref{lem:Omegaomega}. Hence $\Psi_{\elll}([k])=(+1)$, 
$\Psi_{\elll}([\Omega_{D/D_{0}}(k)])=(-1)$ and all assertions hold in this case.
\par
Next, assume that $\elll>1$ and assume that  the assertions hold for $\elll-1$.  
For $u\in D$, write $\overline{u}:=uD_{1}\in D/D_{1}\cong C_{p^{\elll-1}}$, so 
that $\overline{u_{i}}$ is a generator of the cyclic subgroup of order 
$p^{i-1}$ of $D/D_{1}$ for each $2\leq i\leq \elll$.
\par
According to \cite[Exercise (28.3)]{ThevenazBook}, we can decompose $\dade(D)$ 
into the disjoint union 
\[
\dade(D)= \Inf_{D/D_{1}}^{D}\big(\dade(D/D_{1})\big) \sqcup 
\Omega_{D/D_{0}}\Big(\Inf_{D/D_{1}}^{D}\big(\dade(D/D_{1})\big)\Big).
\]
In other words, if $[W]\in \dade(D)$, then there exist $[V]\in\dade(D/D_{1})$  
and $\alpha_{0}\in\{0,1\}$ such that 
\[
W\cong \Omega_{D/D_{0}}^{\alpha_{0}}\big(\Inf_{D/D_{1}}^{D}(V)\big)\,.
\]
Now, on the one hand, if $\alpha_{0}=0$, then $D_{1}$ acts trivially on $W$. Thus, 
for every $u\in D$ we have $\rho_{W}(u)=\rho_{V}(\overline{u})$, implying that 
$\omega_{W}(u)=\omega_{V}(\overline{u})$. In particular, for every 
$u\in D_{1}\setminus\{1\}$ we have $\rho_{W}(u)=\dim_{k}(V)$ and hence 
$\omega_{W}(u)=+1$. On the other hand, if $\alpha_{0}=1$, then 
$\omega_{W}(u)= -\omega_{V}(\overline{u})$  for every 
$u \in D \setminus \{ 1 \}$ by Lemma~\ref{lem:Omegaomega}. In particular, 
$\omega_{W}(u)=-1$ for every $u \in D_{1} \setminus \{ 1 \}$.  So, in all cases, 
$\omega_{W}(u_{1})$ is independent of the choice of $u_{1}$. 
It follows that we can identify $\Psi_{\elll}$ with the map
\begin{center}
\begin{tabular}{cccl}
              &   $\dade(D)$      & $\lra$ &    $ \{\pm1\}\times \{\pm1\}^{\elll-1} $  \\
                                   & [W] & $\mapsto$  & $\big(\omega_{W}(u_{1}),\Psi_{\elll-1}([V])\big)$
\end{tabular}
\end{center}
where 
\[
\Psi_{\elll-1}: \dade(D/D_{1})\lra  \{\pm1\}^{\elll-1}, [V] \mapsto (\omega_{V}(\overline{u_{2}}),\ldots,\omega_{V}(\overline{u_{\elll}}))\,.
\]
By the induction hypothesis $\Psi_{\elll-1}$ is bijective and independent of 
the choice of the generators $u_{2},\ldots,u_{\elll}$. Therefore, so is 
$\Psi_{\elll}$ by the above argument.
\par
Finally, observe that if $W=W_D(\alpha_0,\ldots,\alpha_{\elll-1})$, then we have 
$V=W_{D/D_{1}}(\alpha_{1},\dots,\alpha_{\elll-1})$. It was proved above that 
$\omega_{W}(u_{1})=+1$ if $\alpha_{0}=0$ and $\omega_{W}(u_{1})=-1$ if $\alpha_{0}=1$, 
whereas for each $2\leq i\leq \elll$ the induction hypothesis and 
Lemma~\ref{lem:Omegaomega} imply that 
\begin{equation*}
\begin{split}
    \omega_{W}(u_{i}) =(-1)^{\alpha_{0}}\cdot\omega_{V}(\overline{u_{i}})
     & =
\begin{cases}
(-1)^{\alpha_{0}}\cdot (+1) \,\text{ if } \sum_{j=1}^{i-1}\alpha_{j}\equiv 0\pmod{2},  \\
(-1)^{\alpha_{0}}\cdot (-1) \,\text{ if }  \sum_{j=1}^{i-1}\alpha_{j} \equiv 1\pmod{2}
\end{cases} \\
    & = 
     \begin{cases}
+1 \,\text{ if } \sum_{j=0}^{i-1}\alpha_{j}\equiv 0\pmod{2},  \\
-1 \,\text{ if }  \sum_{j=0}^{i-1}\alpha_{j} \equiv 1\pmod{2},
\end{cases}
\end{split}
\end{equation*}
as claimed.
\end{proof}

\begin{lem}\label{lem:valuescharc}
Set $W:=W(\bB)$. Let $\xi$ denote the non-exceptional character in 
$\Irr_{K}(\bc)$ and let $u\in D\setminus\{1\}$. Then, $\xi(u)$ is a non-zero 
integer and the following assertions hold:

{\rm(i)} $\xi(u)>0$  if and only if $\omega_{W}(u)=+1$;

{\rm(ii)} $\xi(u)<0$  if and only if $\omega_{W}(u)=-1$. \\
Moreover, $\xi$ is constant on $D_{i}\setminus D_{i-1}$ for every 
$1\leq i\leq\elll$.
\end{lem}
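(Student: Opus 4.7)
The plan is to combine Galois-theoretic arguments with Puig's structure theorem for nilpotent blocks. The integrality of $\xi$ on $D$ and its constancy on $D_i \setminus D_{i-1}$ will come from the Galois action on $\Irr_K(\bc)$, while the sign identification will rely on the known description of the source algebra of a nilpotent block.

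First, since $\bc$ is nilpotent its inertial index is $1$, so $\xi$ is the unique non-exceptional character of $\bc$. The Galois group $\mathrm{Gal}(K/\mathbb{Q})$ stabilizes the block $\bc$ (its block idempotent is defined over $\IZ_p$) and permutes the exceptional characters amongst themselves; consequently it fixes $\xi$, forcing $\xi$ to be $\mathbb{Q}$-valued. Its values being algebraic integers, $\xi(u) \in \IZ$ for every $u \in H$. Now for $u, u' \in D_i \setminus D_{i-1}$ one has $u' = u^k$ for some $k$ coprime to $p$, and the Galois automorphism $\sigma_k \colon \zeta_{p^i} \mapsto \zeta_{p^i}^k$ satisfies $\xi(u') = \xi(u^k) = \sigma_k(\xi(u)) = \xi(u)$, proving the last assertion of the lemma.

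Next I take $u \in D \setminus \{1\}$ of order $p^i$. Since $D$ is abelian, $D_1 \leq D_i$ and $C_H(u) = C_G(D_i)$. The nilpotency of $\bc$ together with the abelianness of $D$ yields a unique Brauer correspondent $\bc^{(i)}$ of $\bc$ in $C_G(D_i)$, itself nilpotent with defect $D$ and source $W$; let $\varphi^{(i)}$ denote its unique irreducible Brauer character. Brauer's second main theorem at $s=1$ gives
\[
\xi(u) \;=\; d^{u}_{\xi,\varphi^{(i)}} \cdot \varphi^{(i)}(1).
\]
To pin down the sign I invoke Puig's structure theorem for nilpotent blocks: the source algebra of $\bc$ is isomorphic as interior $D$-algebra to $\End_{\cO}(\widetilde W) \otimes_{\cO} \cO D$, and tracking the unique simple $\bc$-module $S$ through the associated Morita equivalence yields $\Res^{H}_{D} \widetilde S \cong \widetilde W^{\oplus m}$ for some positive integer $m$ (concretely $m = \dim_k S / \dim_k W$). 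Taking ordinary characters gives $\xi|_D = m\,\rho_W$, and since $\rho_W(u) \in \IZ \setminus \{0\}$ with $\omega_W(u) = \mathrm{sign}(\rho_W(u))$ (from the discussion preceding Lemma~\ref{lem:Omegaomega}), assertions~(i) and~(ii) follow at once.

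The hard part will be making the identification $\xi|_D = m\,\rho_W$ rigorous: Puig's theorem gives the source algebra structure cleanly, but transporting this information to an honest restriction statement for the ordinary character $\xi$ requires some care with the source-algebra equivalence. A more self-contained alternative would be an induction on $\elll$: for $\elll = 1$, use the vanishing $\chi_{P_S}(u) = 0$ together with the explicit form of the $p-1$ exceptional characters of $\bc$ to compute $\xi(u)$ directly; for $\elll > 1$, descend through the central $p$-subgroup $D_1$ and invoke Lemma~\ref{lem:Omegaomega} to handle the Heller twist.
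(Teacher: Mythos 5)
Your approach is genuinely different from the paper's, but it contains a real gap at the decisive step. The paper does not attempt to describe $\Res^H_D S$ at all; instead it uses the nilpotency of $\bc$ to invoke \cite[Theorem~(52.8)(a)]{ThevenazBook}, which packages Puig's structure theorem into a character bijection $\Irr_K(\bc)\to\Irr_K(D)$ under which generalized decomposition numbers of $\bc$ are $\omega_W(u)$ times ordinary decomposition numbers of $\cO D$. Combined with Dade's theorem (which identifies $\xi$ with the unique $p$-rational character $1_D$) and Brauer's second main theorem, this yields $\xi(u)=\omega_W(u)\cdot m$ with $m$ a positive integer, from which everything in the lemma — integrality, the sign equivalences, and constancy on $D_i\setminus D_{i-1}$ via Lemma~\ref{lem:bijectionsigns} — drops out at once, with no Galois-theoretic preliminaries.

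The gap in your argument is the assertion $\Res^H_D\widetilde S\cong \widetilde W^{\oplus m}$. Since $D$ is in general not normal in $H=C_G(D_1)$, the restriction of the simple $\bc$-module to $D$ is an endo-permutation $kD$-module of the form $W^{\oplus m}\oplus X$ where $m\geq 1$ and the summands of $X$ have vertices properly contained in $D$ — but $X$ need not vanish. The $X$-summands do contribute non-zero character values at elements $u\in D$ of small order, so the identity $\xi|_D=m\,\rho_W$ does not follow. (Note also that your formula $\xi(u)=m\,\rho_W(u)$ with $m$ fixed is genuinely different from the paper's $\xi(u)=m_u\,\omega_W(u)$ with $m_u$ depending on $u$; $|\rho_W(u)|$ is generally not~$1$.) You flag this yourself as ``the hard part,'' and indeed it is exactly the obstruction that the Thévenaz formulation is designed to bypass: working at the level of generalized decomposition numbers, one never has to control the full restriction of $S$ to $D$. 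The Galois argument you give for rationality and constancy on $D_i\setminus D_{i-1}$ is conceptually fine (the non-exceptional character is the unique $p$-rational one, hence fixed by the relevant Galois group), but it is extra machinery the paper does not need.
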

\begin{proof}
As $\bc$ is a nilpotent block, there is a Morita equivalence between~$\bc$ 
and~$kD$. This equivalence can be lifted to a Morita equivalence over~$\cO$, 
which, by~\cite[Theorem~(52.8)(a)]{ThevenazBook}, in turn induces a unique 
character bijection $\Irr_{K}(\bc)\lra \Irr_{K}(D)$ determining the generalized 
decomposition numbers of $\bc$. It then follows 
from~\cite[Theorem~(52.8)(a)]{ThevenazBook} again and Dade's Theorem on cyclic 
blocks (see e.g. \cite[Theorem~68.1(8)]{DornhoffB}) that this bijection 
maps $\xi$ to the trivial character $1_{D}$, because $1_{D}$ is the unique 
$p$-rational element of $\Irr_{K}(D)$.
Therefore, Brauer's second main theorem (see~\cite[Theorem (43.4)]{ThevenazBook}) together 
with~\cite[Theorem~(52.8)(a)]{ThevenazBook} imply that
\begin{equation}\label{eq:xi(u)}
\xi(u)=\omega_{W}(u) \cdot m\,,
\end{equation}
where~$m$ is a sum of degrees of Brauer characters, hence a positive integer. 
Hence $\xi(u)$ is a non-zero integer and satisfies (i) and (ii). The last claim 
also follows from (\ref{eq:xi(u)}) as $\omega_{W}$ is constant on 
$D_{i}\setminus D_{i-1}$ for every $1\leq i\leq\elll$ by 
Lemma~\ref{lem:bijectionsigns}.
\end{proof}

We can now prove that the module $W(\bB)$ can be read off from the values of  
the non-exceptional characters in $\Irr_{K}(\bB)$ at the non-trivial elements 
of $D$.
\par
In the proof of this result, we will need to consider some specific modules,
called \emph{hooks}. We refer to our paper~\cite{HL19} for details, 
but these modules can be thought of as being the indecomposable $\bB$-modules 
lying on the rim of the stable Auslander-Reiten quiver of the block.

\begin{thm}\label{thm:readingWfromIrr(B)}
Let $p$ be an odd prime. Let $\bB$ be a $p$-block of $kG$ with a cyclic defect 
group $D\cong C_{p^\elll}$ where~$\elll\geq 1$. Set $W:=W(\bB)$ and let~$\chi$ 
be a non-exceptional character in $\Irr_{K}(\bB)$. For each $1\leq i\leq \elll$, 
let $u_{i}\in D_{i}\setminus D_{i-1}$. Then, $\chi(u_{i})$ is a non-zero integer 
for every $1\leq i\leq\elll$ and  the following assertions hold. 
\begin{enumerate}[label={\rm(\alph*)}]
\item If $\chi(u_{1})>0$, then for every $2\leq i\leq\elll$, we have

{\rm(i)} $\chi(u_{i})>0$ if and only if  $\omega_{W}(u_{i})=+1$;  

{\rm(ii)} $\chi(u_{i})<0$ if and only if  $\omega_{W}(u_{i})=-1$; 
\item If $\chi(u_{1})<0$, then for every $2\leq i\leq\elll$, we have

{\rm(i)}  $\chi(u_{i})>0$ if and only if  $\omega_{W}(u_{i})=-1$;  

{\rm(ii)} $\chi(u_{i})<0$ if and only if  $\omega_{W}(u_{i})=+1$.
\end{enumerate}
Moreover, $W$ is determined up to isomorphism by the character values 
$\chi(u_{1}),\ldots,\chi(u_{\elll})$.
\end{thm}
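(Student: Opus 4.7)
The plan starts from the observation that $D_{1}$ acts trivially on $W := W(\bB)$, as recorded in the Introduction. In the notation of the proof of Lemma~\ref{lem:bijectionsigns} this means $\alpha_{0}=0$ for $W$, and hence $\omega_{W}(u_{1}) = +1$ automatically. By the bijectivity of the map $\Psi_{\elll}$ in Lemma~\ref{lem:bijectionsigns}, the class $[W] \in \dade(D)$ is already determined by the remaining $\elll-1$ signs $\omega_{W}(u_{2}),\ldots,\omega_{W}(u_{\elll})$. Hence, to prove both the dichotomy (a)/(b) and the ``moreover'' clause, it is enough to read each $\omega_{W}(u_{i})$ for $2 \leq i \leq \elll$ off from the values $\chi(u_{1}),\ldots,\chi(u_{\elll})$, together with integrality and non-vanishing of those values.

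The heart of the argument is to establish an identity of the form
\[
\chi(u_{i}) \;=\; \delta_{\chi}\cdot\xi(u_{i}) \qquad (1 \leq i \leq \elll),
\]
where $\xi$ is the unique non-exceptional character of the nilpotent block $\bc$ appearing in Lemma~\ref{lem:valuescharc}, and $\delta_{\chi}\in\{\pm 1\}$ depends on $\chi$ but \emph{not} on $i$. I plan to derive this via the hook modules of \cite{HL19}: each non-exceptional $\chi \in \Irr_{K}(\bB)$ is afforded by an $\cO G$-lattice $\widetilde{M}_{\chi}$ lifting a hook, whose restriction to~$D$ is controlled, up to a projective $\cO D$-summand, by a (signed) multiple of the canonical determinant-one lift $\widetilde{W}$ of $W$; the two possible signs correspond to the two possible ``types'' of the edge labelled by $\chi$ in the planar embedded Brauer tree $\sigma(\bB)$. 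Since projective $\cO D$-characters vanish on $D \setminus \{1\}$ and $\rho_{\Omega W}(u) = -\rho_{W}(u)$ there, taking traces at $u_{i}$ produces the displayed identity with the correct sign~$\delta_{\chi}$. A purely character-theoretic alternative is to combine Brauer's second main theorem with Dade's description of the generalized decomposition numbers of cyclic blocks: the single relevant generalized decomposition number $d^{u_{i}}_{\chi,\varphi_{\bc}}$ attached to the unique simple Brauer character of $\bc$ equals $\pm 1$, and its sign depends only on the edge of $\sigma(\bB)$ labelled by $\chi$.

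Granted the identity, Lemma~\ref{lem:valuescharc} yields $\chi(u_{i}) = \delta_{\chi}\cdot\omega_{W}(u_{i})\cdot m_{i}$ for a positive integer $m_{i}$. In particular $\chi(u_{i})\in\IZ\setminus\{0\}$, and its sign equals $\delta_{\chi}\cdot\omega_{W}(u_{i})$. Evaluating at $i=1$ and using $\omega_{W}(u_{1})=+1$ identifies $\delta_{\chi}$ with the sign of $\chi(u_{1})$. Substituting back gives
\[
\omega_{W}(u_{i}) \;=\; \bigl(\text{sign of }\chi(u_{1})\bigr)\cdot\bigl(\text{sign of }\chi(u_{i})\bigr) \qquad (2 \leq i \leq \elll),
\]
which is exactly the dichotomy (a)/(b) of the theorem; the ``moreover'' clause then follows at once from Lemma~\ref{lem:bijectionsigns}, since the full sign vector $(\omega_{W}(u_{1}),\ldots,\omega_{W}(u_{\elll}))$ has been recovered from the character values $(\chi(u_{1}),\ldots,\chi(u_{\elll}))$.

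The main obstacle will be securing the uniformity of the sign $\delta_{\chi}$ as $i$ varies. Without such uniformity one can at best extract the products $\omega_{W}(u_{1})\omega_{W}(u_{i})$ from the character values, not the individual signs; only the structural rigidity provided by cyclic defect---either through the explicit restriction to $D$ of the hook modules analysed in \cite{HL19}, or through the sign properties of Dade's generalized decomposition numbers---rules out an $i$-dependence in $\delta_{\chi}$. This is where the bulk of the technical work sits.
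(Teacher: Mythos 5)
Your overall plan coincides with the paper's: reduce everything to the observation that $\omega_{W}(u_{1})=+1$, establish a uniform sign $\delta_{\chi}$ with $\operatorname{sign}\chi(u_{i})=\delta_{\chi}\cdot\omega_{W}(u_{i})$ for all $i$, and then conclude via Lemma~\ref{lem:valuescharc} and the bijectivity of $\Psi_{\elll}$ in Lemma~\ref{lem:bijectionsigns}. The paper, too, proceeds through the hook modules of \cite{HL19}, treating first $\bc$, then $\bb$, then the general case.

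What you have not actually done, however, is prove the one claim you yourself flag as carrying the bulk of the technical weight: the uniformity of $\delta_{\chi}$ in $i$. Your proposal sketches two possible routes and then stops. The paper closes this gap concretely: it takes the hook $H$ affording $\chi$, passes to its Green correspondent $f(H)$ in $N_{1}$, invokes \cite[Lemma~4.1]{HL19} to see that $f(H)$ is simple precisely when $\chi(u_{1})>0$ and is $\Omega$ of a simple precisely when $\chi(u_{1})<0$, and then invokes \cite[Lemma~4.2 and Corollary~4.3]{HL19} to obtain the identity $\chi(u_{i})=\psi(u_{i})$ (resp.\ $\chi(u_{i})=-\psi(u_{i})$) uniformly in $i$, where $\psi$ is a non-exceptional character of $\bb$. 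The passage from $\psi$ to $\xi$ is then by Clifford theory together with Green's theorem on zeroes of characters. Without something of this specificity your argument establishes only the products $\omega_{W}(u_{1})\omega_{W}(u_{i})$ up to an unknown $i$-dependent sign, which is exactly the failure mode you describe.

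One further caveat on the mechanism you propose: the assertion that the restriction to $D$ of the $\cO G$-lattice affording $\chi$ equals a sign times $\widetilde{W}$ up to a \emph{projective} $\cO D$-summand is not literally correct. The non-source summands of $\operatorname{Res}^{G}_{D}$ of an indecomposable with vertex~$D$ can have any vertex properly contained in $D$, and for $D$ of order $p^{\elll}$ with $\elll\geq 2$ these are generally not projective; such summands do contribute to $\chi(u_{i})$ for small $i$. The argument must instead pass through the Green correspondent in $N_{1}$ (or, equivalently, through Brauer's second main theorem at the relevant subpairs), which is precisely what the paper does. Your alternative route via Dade's generalized decomposition numbers is viable in principle and is essentially what underlies the proof of Lemma~\ref{lem:valuescharc}, but again the uniformity of the sign over the subgroups $D_{i}$ is the point that must be extracted from Dade's theorem, not merely asserted.
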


\begin{proof}
First assume that $G=C_{G}(D_{1})$, so that $\bB=\bc$. Let $\xi$ denote the 
non-exceptional character in $\Irr_{K}(\bc)$.  By Lemma~\ref{lem:valuescharc}, 
clearly, $\xi(u_{i})$ is a non-zero integer for every $1\leq i\leq\elll$ and 
$\xi(u_{1})>0$ as $D_{1}$ acts trivially on $W$ by Clifford theory. Hence 
only (a) can happen in this case and (i) and (ii) are given by 
Lemma~\ref{lem:valuescharc}.
\par
Next assume that  $G=N_1$, so that $\bB=\bb$.
If $\psi$ is a non-exceptional $K$-character of $\bb$, then $\psi$ lies 
above~$\xi$ and 
\[
\Res^{N_1}_{C_{G}(D_{1})}(\psi)=\sum_{i=1}^{t}{^{g_{i}}\!\xi}
\]
for suitable elements $g_{1},\ldots,g_{t}\in N_1$. Thus, for any 
$u_{i}\in D_{i}\setminus D_{i-1}$, we have
\[
\psi(u_{i})=\sum_{j=1}^{t}\xi(g_{j}^{-1}u_{i}g_{j})\,.
\] 
Now, if $g_{j}^{-1}u_{i}g_{j}$ is not conjugate  to an element of~$D$ in 
$C_{G}(D_{1})$, then $\xi(g_{j}^{-1}u_{i}g_{j})=0$ by Green's theorem on zeroes 
of characters; see~\cite[(19.27)]{CR1}. If $g_{j}^{-1}u_{i}g_{j}$ is conjugate  
to an element  $u$ of $D$ in $C_{G}(D_{1})$, then $|u|=|u_{i}|$, and so 
$\xi(g_{j}^{-1}u_{i}g_{j})=\xi(u_{i})$ by Lemma~\ref{lem:valuescharc}.  
Therefore, Lemma~\ref{lem:valuescharc} also implies that $\psi(u_{i})$ is a 
non-zero integer as $\xi(u_{i})$ is and we have 
\begin{equation}\label{eq:psi>0}
\psi(u_{i})>0\quad \text{if and only if} \quad \xi(u_{i})>0, 
\end{equation}
and
\begin{equation}\label{eq:psi<0}
\psi(u_{i})<0\quad \text{if and only if} \quad \xi(u_{i})<0.
\end{equation}
Again in this case only (a) can happen and (i) and (ii) 
follow from (a) and (b) of Lemma~\ref{lem:valuescharc}.
\par
{
Assume now that $G$ is arbitrary. 
Then $\chi$ is the $K$-character of a lift~$\widehat{H}$ to $\cO$ 
of a hook~$H$ of $\bB$; see~\cite[\S3.5 and \S4.2]{HL19}.
(In fact, if the inertial index of $\bB$ is at least $2$, then all lifts of $H$ afford $\chi$.)
Let $f(H)$ and $f(\widehat{H})$ denote the Green correspondents 
in $N_{1}$ of $H$ and $\widehat{H}$, respectively.  
Then, $f(H)$ belongs to $\bb$ and is also a hook of $\bb$.
By \cite[Lemma~4.1]{HL19}, we have that $f(H)$ is simple if $\chi(u_{1})>0$, 
whereas $f(H)$ is the Heller-translate of a simple $\bb$-module if $\chi(u_{1})<0$.
\par
So, first assume that $\chi(u_{1})>0$. Then, by \cite[Lemma~4.2 and 
Corollary~4.3]{HL19}, $f(\widehat{H})$ lifts $f(H)$ and  affords a 
non-exceptional $K$-character $\psi$ of $\bb$ satisfying 
$\chi(u_{i})=\psi(u_{i})$ for every ${u_{i}\in D_{i}\setminus D_{i-1}}$ and every 
$1\leq i\leq\elll$.  Thus (i) and (ii) of (a) follow from~(\ref{eq:psi>0}) 
and~(\ref{eq:psi<0}) and Lemma~\ref{lem:valuescharc}. 
\par
Next, assume that $\chi(u_{1})<0$. Then, an analogous argument yields that
a lift of $\Omega(f(H)) \cong f(\Omega(H))$ affords a non-exceptional 
$K$-character $\psi$ of $\bb$ satisfying 
$\chi(u_{i})=-\psi(u_{i})$ for every $u_{i}\in D_{i}\setminus D_{i-1}$ and 
every $1\leq i\leq\elll$, since projective characters vanish at $p$-singular 
elements. Thus (i) and (ii) of (b) follow from (\ref{eq:psi>0}) 
and~(\ref{eq:psi<0}) and Lemma~\ref{lem:valuescharc}.
}
\par
Finally, by Lemma~\ref{lem:bijectionsigns}, up to isomorphism $W$ is uniquely 
determined by the signs $\omega_{W}(u_{1}),\ldots,\omega_{W}(u_{\elll})$, where 
as seen  above $\omega_{W}(u_{1})=+1$ by definition of $W$. Hence, the last 
claim is  immediate from (a) and~(b).
\end{proof}

As a consequence, we can express $W(\bB)$ as 
$W(\bB)=W_D(\alpha_0,\ldots,\alpha_{\elll-1})$ where the integers 
$\alpha_0,\ldots,\alpha_{\elll-1}\in\{0,1\}$ are determined recursively using the 
formulae of Lemma~\ref{lem:bijectionsigns}.

\begin{cor}\label{cor:W(B)=k}
For each $1\leq i\leq \elll$, let $u_{i}\in D_{i}\setminus D_{i-1}$. Then, 
$W(\bB)$ is trivial if and only if there exists a non-exceptional character 
$\chi$ in $\Irr_{K}(\bB)$ such that either $\chi(u_{i})>0$ for all 
$1\leq i\leq\elll$ or $\chi(u_{i})<0$ for all $1\leq i\leq\elll$. 
\end{cor}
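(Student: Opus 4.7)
The plan is to reduce the corollary directly to Theorem~\ref{thm:readingWfromIrr(B)} combined with Lemma~\ref{lem:bijectionsigns}, with essentially no new work beyond a careful unpacking of the sign bookkeeping. The key preliminary observation is that by Lemma~\ref{lem:bijectionsigns}, the map $\Psi_{\elll}$ is a bijection $\dade(D) \to \{\pm 1\}^{\elll}$, and since the trivial $kD$-module $k$ has $\rho_{k} = 1_{D}$, its image is $(+1, +1, \ldots, +1)$. Hence $W(\bB) \cong k$ if and only if $\omega_{W(\bB)}(u_{i}) = +1$ for every $1 \leq i \leq \elll$. This reduces the problem to comparing character signs with values of $\omega_{W}$.

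For the forward implication, I would fix any non-exceptional $\chi \in \Irr_{K}(\bB)$ (such characters exist, e.g.\ any hook character). Assuming $W(\bB) \cong k$, we have $\omega_{W}(u_{i}) = +1$ for all~$i$. By Theorem~\ref{thm:readingWfromIrr(B)} each $\chi(u_{i})$ is a non-zero integer, so $\chi(u_{1})$ has one of the two possible signs. If $\chi(u_{1}) > 0$, case~(a)(i) of the theorem forces $\chi(u_{i}) > 0$ for every $2 \leq i \leq \elll$, and so for every $1 \leq i \leq \elll$. Symmetrically, if $\chi(u_{1}) < 0$, case~(b)(ii) yields $\chi(u_{i}) < 0$ for all $1 \leq i \leq \elll$. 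Either way $\chi$ has the required uniform sign.

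For the backward implication, suppose some non-exceptional $\chi$ has $\chi(u_{i}) > 0$ for all~$i$. Then in particular $\chi(u_{1}) > 0$, so part~(a) of Theorem~\ref{thm:readingWfromIrr(B)} applies, and (a)(i) gives $\omega_{W}(u_{i}) = +1$ for every $2 \leq i \leq \elll$; combined with $\omega_{W}(u_{1}) = +1$ (which holds automatically since $D_{1}$ acts trivially on $W(\bB)$, as noted in Subsection~\ref{ssec:cyclicBlocks}), we conclude $\Psi_{\elll}([W(\bB)]) = (+1, \ldots, +1)$, and the bijectivity in Lemma~\ref{lem:bijectionsigns} gives $W(\bB) \cong k$. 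If instead $\chi(u_{i}) < 0$ for all~$i$, then $\chi(u_{1}) < 0$, part~(b) applies, and (b)(ii) analogously yields $\omega_{W}(u_{i}) = +1$ for all $2 \leq i \leq \elll$, whence again $W(\bB) \cong k$.

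There is no real obstacle: the content of the corollary is already encoded in the sign dictionary of Theorem~\ref{thm:readingWfromIrr(B)}. The only subtlety worth flagging carefully is that in the backward direction one must invoke the fact that $\omega_{W(\bB)}(u_{1}) = +1$ always holds — this comes from $D_{1}$ acting trivially on $W(\bB)$, which is precisely why the theorem itself never features a sign switch at index $1$ inside the two cases.
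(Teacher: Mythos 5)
Your proposal is correct and follows exactly the same route as the paper's proof: identify the trivial module with $(+1,\ldots,+1)$ under the bijection $\Psi_{\elll}$ of Lemma~\ref{lem:bijectionsigns}, then invoke the sign dictionary of Theorem~\ref{thm:readingWfromIrr(B)}. You simply spell out the two-case analysis that the paper leaves implicit, including the (correct) observation that $\omega_{W(\bB)}(u_1)=+1$ automatically because $D_1$ acts trivially on $W(\bB)$.
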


\begin{proof}
The trivial $kD$-module affords the trivial character of~$D$, implying that 
$\omega_{k}(u_{i})=+1$ for every $1\leq i\leq \elll$. Hence the claim 
follows from the bijection $\Psi_{\elll}$ in Lemma~\ref{lem:bijectionsigns} and 
Theorem~\ref{thm:readingWfromIrr(B)}.
\end{proof}

\subsection{Criteria for the triviality of~$W(\bB)$}
Here,~$p$ is an arbitrary prime. We provide further theoretical criteria for the 
triviality of $W(\bB)$ not involving character theory.

\begin{lem}\label{lem:B_0}\label{NG(D_1)}
In each of the following cases $W(\bB) \cong k$.

{\rm (a)}  If $\bB$ is the principal block of $kG$; 

{\rm (b)}  if $C_G(D)=C_G(D_1)$ or $N_G(D)=N_G(D_1)$;

{\rm (c)}  if $p = 2$ and $|D| = 4$.
\end{lem}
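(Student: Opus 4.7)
The plan is to dispose of the three cases in turn, each by specialising the general principle that $W(\bB) = W(\bc)$ equals a source of the (unique) simple $\bc$-module.

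For part~(a), if~$\bB$ is the principal block of~$G$, then by standard variants of Brauer's Third Main Theorem (together with the fact that principal blocks of normal subgroups are covered by principal blocks), $\bc$ is the principal block of $C_G(D_1)$. Being nilpotent, $\bc$ has a unique simple module, which must therefore be the trivial module~$k$. Its restriction to~$D$ is again~$k$, which is indecomposable with vertex~$D$ and hence its own source. So $W(\bB) \cong k$.

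For part~(b), my main input is the classical fact that a normal $p$-subgroup acts trivially on every simple module in characteristic~$p$: its fixed-point subspace in any simple module is a non-zero submodule, hence the whole module. If $C_G(D) = C_G(D_1)$, then $D \leq Z(C_G(D_1))$, in particular $D \triangleleft C_G(D_1)$, so~$D$ acts trivially on the simple $\bc$-module~$S$; the restriction $\Res^{C_G(D_1)}_D S$ is then a direct sum of copies of~$k$, whence $W(\bc) \cong k$ because~$k$ has vertex~$D$. If instead $N_G(D) = N_G(D_1)$, then, using that the unique subgroup~$D_1$ of order~$p$ in the cyclic group~$D$ is characteristic in~$D$, the hypothesis forces $D \triangleleft N_G(D_1)$, and the analogous argument applied to the simple $\bb$-modules of $N_G(D_1)$ yields $W(\bb) \cong k$.

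For part~(c), with $p = 2$ and $D \cong C_4$, the key constraint is that $D_1$ acts trivially on $W(\bB)$ (as recalled in Section~\ref{ssec:cyclicBlocks}). Consequently $W(\bB) \cong \Inf^D_{D/D_1}(V)$ for some indecomposable capped endo-permutation $k[D/D_1]$-module~$V$. But $D/D_1 \cong C_2$ and, in characteristic~$2$, the only indecomposable $kC_2$-modules are~$k$ (vertex~$C_2$) and the regular module~$kC_2$ (projective, with trivial vertex); therefore the only capped indecomposable endo-permutation $kC_2$-module is~$k$ itself, so $V \cong k$ and $W(\bB) \cong k$. I do not foresee any serious obstacle: (a) and~(b) follow from standard block-theoretic facts, and~(c) reduces to a direct inspection of $kC_2$-modules. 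The only mildly delicate point is in~(c), where the formula~(\ref{eq:dadegroupcyclic}) quoted earlier holds strictly for odd~$p$; but the restriction that $D_1$ act trivially on $W(\bB)$ is exactly what cuts the relevant Dade group down to the trivial one, sidestepping that concern.
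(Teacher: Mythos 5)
Your proof is correct and follows essentially the same route as the paper's: in (a) you use Brauer's Third Main Theorem to land in a principal local block whose unique simple module is trivial, in (b) you use normality of $D$ in the relevant local subgroup to force a trivial source, and in (c) you exploit the fact that $D_1$ acts trivially on $W(\bB)$. The only cosmetic differences are that you argue via $\bc$ rather than $\bb$ in (a), and in (c) you pass to the quotient $D/D_1 \cong C_2$ instead of directly enumerating the two indecomposable capped endo-permutation $kC_4$-modules, namely $k$ and $\Omega(k)$, as the paper does.
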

\begin{proof}
(a) If $\bB$ is the principal block of $kG$, then its Brauer correspondent $\bb$ 
in $N_1$ is the principal block of $kN_1$. Hence the trivial $kN_1$-module is a 
simple $\bb$-module. It follows that $W(\bB)$ is trivial.

(b) Since $D$ is normal in $C_G(D)$ and in $N_G(D)$ it follows from Clifford 
theory that $D$ acts trivially on the unique simple $kC_G(D)$-module and on the 
simple $kN_G(D)$-modules, hence they are trivial source modules. As we assume 
that $C_G(D)=C_G(D_1)$ or $N_G(D)=N_G(D_1)$, it follows that $W(\bB)$ is 
trivial.

(c) There are only two indecomposable capped endo-permutation modules of $kD$, 
namely~$k$ and $\Omega(k)$, where $\Omega$ denotes the Heller operator. 
But~$D_1$ does not act trivially on $\Omega(k)$, hence $W(\bB) \cong k$.
\end{proof}

Notice that Lemma~\ref{NG(D_1)}(b) applies in particular if $D \leq Z(G)$ or 
$D = D_1$.

\section{Reduction theorems}\label{sec:red}

This section is devoted to Clifford theory with respect to the source algebra 
equivalence classes of blocks with cyclic defect groups. 
{Throughout this 
section,~$G$ is a finite group and~$\mathbf{B}$ a $p$-block of~$G$ with a 
non-trivial cyclic defect group~$D$. For further notation refer to 
Subsection~\ref{ssec:cyclicBlocks}.
Our aim is in particular to extend the results of \cite[Section~$4$]{Fei84}. 
When possible, we state these results in terms of source algebra equivalences, 
which obviously implies that the module $W(\bB)$ is left unchanged.
Else we study the behavior of  $W(\bB)$ under each of these results. 
}

By definition, the kernel of~$\mathbf{B}$ is the intersection of the kernels of
the characters in $\Irr( \mathbf{B} )$.

We begin with \cite[Lemma~$4.1$]{Fei84}, which is a well-known result on inflation.

\begin{lem}
\label{Inflation}
Let $H \unlhd G$.
Suppose that~$H$ is in the kernel of~$\mathbf{B}$ and that $p \nmid |H|$. Put 
$\bar{G} := G/H$ and write $\bar{\ } : kG \rightarrow k\bar{G}$ for the 
canonical epimorphism. Then~$\mathbf{B}$ and its image $\bar{\mathbf{B}}$ 
in $k\bar{G}$ are source algebra equivalent.
\end{lem}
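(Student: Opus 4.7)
The plan is to restrict the canonical $k$-algebra epimorphism $kG\to k\bar{G}$ to~$\mathbf{B}$ and to show that it yields an isomorphism of interior $D$-algebras $\mathbf{B}\xrightarrow{\sim}\bar{\mathbf{B}}$; any such isomorphism produces a source algebra equivalence by transporting a source idempotent.

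First, since $H\unlhd G$ and $p\nmid|H|$, the element $e_H:=|H|^{-1}\sum_{h\in H}h$ is a well-defined central idempotent of $kG$ (and of $\cO G$), and the assignment $g\cdot e_H\mapsto\bar{g}$ extends to a $k$-algebra isomorphism $\varphi\colon kG\cdot e_H\xrightarrow{\sim}k\bar{G}$. The crucial identity is $e\cdot e_H=e$, where $e\in kG$ is the block idempotent of~$\mathbf{B}$. To see this, lift $e$ to $\cO G$ and recall that $e_H=\sum_{\chi}e_\chi$ in $KG$, where the sum runs over those $\chi\in\Irr_K(G)$ with $H\leq\ker\chi$; since by hypothesis every $\chi\in\Irr_K(\mathbf{B})$ has $H$ in its kernel, the lift of $e$ is a subsum of these $e_\chi$, and reducing modulo~$p$ yields $e\cdot e_H=e$. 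Consequently, $\varphi$ restricts to a $k$-algebra isomorphism $\Phi\colon\mathbf{B}\xrightarrow{\sim}\bar{\mathbf{B}}$ with $\Phi(e)=\bar{e}$.

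Next, I would promote $\Phi$ to an isomorphism of interior $D$-algebras. Since $D$ is a $p$-group and $|H|$ is coprime to~$p$, we have $D\cap H=1$, so the canonical epimorphism identifies~$D$ with its image $\bar{D}\leq\bar{G}$, and~$\bar{D}$ is then a defect group of $\bar{\mathbf{B}}$ by a standard lifting argument on relative traces. For every $d\in D$ one has $\Phi(d\cdot e)=\bar{d}\cdot\bar{e}$, so~$\Phi$ intertwines the structural maps $D\to\mathbf{B}$ and $\bar{D}\to\bar{\mathbf{B}}$ under the identification $D\cong\bar{D}$. Applying $\Phi$ to a source idempotent $i\in\mathbf{B}^D$ then yields a source idempotent $\Phi(i)\in\bar{\mathbf{B}}^{\bar{D}}$, and $\Phi$ restricts to an isomorphism of source algebras $i\mathbf{B}i\xrightarrow{\sim}\Phi(i)\bar{\mathbf{B}}\Phi(i)$ as interior $D$-algebras, giving the required source algebra equivalence.

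The main obstacle is the verification of $e\cdot e_H=e$. It is elementary once one has the three ingredients in place: $|H|^{-1}\in\cO$, the description of $e_H$ as the sum of those central primitive idempotents of $\cO G$ whose characters are trivial on~$H$, and the assumption that $\Irr_K(\mathbf{B})$ consists of characters with $H$ in their kernel. Once this is secured, the remaining compatibility with the interior $D$-algebra structure is a direct translation of structures through~$\Phi$.
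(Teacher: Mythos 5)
Your argument is correct and follows essentially the same route as the paper, which merely asserts that the canonical epimorphism is an isomorphism of interior $D$-algebras between~$\mathbf{B}$ and~$\bar{\mathbf{B}}$; you supply the verifications (the identity $e\cdot e_H=e$, the identification $D\cong\bar D$, and the transport of a source idempotent) that the paper leaves implicit.
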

\begin{proof}
As $p \nmid |H|$, the image~$\bar{\mathbf{B}}$ of~$\mathbf{B}$ is isomorphic
to~$\mathbf{B}$. Moreover, 
the canonical epimorphism is an isomorphism of interior $D$-algebras between
$\mathbf{B}$ and $\bar{\mathbf{B}}$. This gives our claim.
\end{proof}

Then \cite[Lemma~$4.2$]{Fei84} is the first Fong reduction, 
which can also be stated in terms of source-algebra equivalences.

\begin{lem}
\label{FongReynolds}
Let $H \unlhd G$ and let~$\mathbf{B}_0$ be a $p$-block of~$H$. Then there is a
bijection between the $p$-blocks of~$G$ covering $\bB_0$ and the 
$p$-blocks of $\Stab_G( \mathbf{B}_0 )$ covering~$\mathbf{B}_0$, under which
corresponding blocks are source algebra equivalent.
\end{lem}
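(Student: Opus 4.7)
The plan is to combine the classical block-theoretic Fong--Reynolds bijection with an enhancement showing that the resulting Morita equivalence is implemented by a trivial-source bimodule, hence is a source algebra equivalence.

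First, set $T := \Stab_G(\mathbf{B}_0)$, which is a subgroup of $G$ containing $H$. The classical Fong--Reynolds theorem asserts that block induction $\mathbf{b} \mapsto \mathbf{b}^G$ yields a bijection between the set of $p$-blocks of $T$ covering $\mathbf{B}_0$ and the set of $p$-blocks of $G$ covering $\mathbf{B}_0$, with inverse induced by restriction. For corresponding blocks $\mathbf{b}$ of $T$ and $\mathbf{B} := \mathbf{b}^G$ of $G$, one may choose a common defect group $D$ with $D \leq T$, and $\mathbf{B}$ is Morita equivalent to $\mathbf{b}$ via the bimodule $M := kG \cdot e_{\mathbf{b}} \cong kG \otimes_{kT} \mathbf{b}$, where $e_{\mathbf{b}}$ denotes the block idempotent of $\mathbf{b}$.

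To upgrade this to a source algebra equivalence, I would show that $M$ has trivial sources as a $k[G \times T^{\mathrm{op}}]$-module with vertex the diagonal subgroup $\Delta D := \{(d,d) : d \in D\}$. A direct route is to exhibit a source idempotent $i \in \mathbf{b}^D$ that remains a source idempotent of $\mathbf{B}$ under the natural embedding $\mathbf{b} \hookrightarrow \mathbf{B}$, and to verify that the interior $D$-algebras $i\mathbf{b}i$ and $i\mathbf{B}i$ coincide. The key input is that the maximal $(\mathbf{B},D)$-Brauer pair and the maximal $(\mathbf{b},D)$-Brauer pair can both be taken of the form $(D, e_D)$ with $e_D$ a block of $k C_G(D)$; this uses the inclusion $C_G(D) \leq T$, which follows because $C_G(D)$ normalises $H$ and fixes (via conjugation) the block of $k C_H(D)$ naturally attached to $\mathbf{B}_0$ through the Brauer homomorphism, forcing $C_G(D)$ to stabilise $\mathbf{B}_0$ itself.

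The main obstacle is this matching of Brauer-pair data between $\mathbf{b}$ and $\mathbf{B}$; once secured, the equality $i\mathbf{b}i = i\mathbf{B}i$ is immediate, since both source algebras arise from the same primitive idempotent of $(k C_G(D) \cdot e_D)^D$ through the embedding of source algebras inside $\End_k(M)$. This directly yields the desired isomorphism of interior $D$-algebras, and hence the source algebra equivalence between $\mathbf{b}$ and $\mathbf{B}$.
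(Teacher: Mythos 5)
The paper delegates this entirely to \cite[Theorem~6.8.3]{LinckBook} rather than proving it from scratch, so there is no in-text argument to match against; what follows compares your sketch to the standard proof underlying that citation. Your high-level plan is the right one: realize the Fong--Reynolds Morita equivalence by the bimodule $M=e_{\mathbf B}kG e_{\mathbf b}$, observe that $M$ is a direct summand of the permutation $k[G\times T]$-module $kG$ and hence has trivial source, and pin down the vertex as $\Delta D$, either directly or by showing that a source idempotent of $\mathbf b$ also serves as one for $\mathbf B$ with the same source algebra.

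Where the sketch has a genuine gap is in the passage from ``$C_G(D)\leq T$ and matching Brauer pairs'' to ``$i\mathbf bi=i\mathbf Bi$ is immediate.'' Those facts do not by themselves give the equality of source algebras inside $kG$, nor do they justify the invoked ``natural embedding $\mathbf b\hookrightarrow\mathbf B$.'' The step that actually does the work is the elementary computation that $e_{\mathbf b}\,g\,e_{\mathbf b}=0$ for all $g\in G\setminus T$: writing $e_0$ for the block idempotent of $\mathbf B_0$, one has $e_{\mathbf b}e_0=e_{\mathbf b}$ because $e_0$ is $T$-central, while $e_0$ and $g^{-1}e_0g$ are orthogonal for $g\notin T$, so $e_{\mathbf b}ge_{\mathbf b}=e_{\mathbf b}g(g^{-1}e_0g)e_0e_{\mathbf b}=0$. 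This yields $e_{\mathbf b}kGe_{\mathbf b}=\mathbf b$; consequently each $e_{\mathbf b}e_{\mathbf B'}$ is a central idempotent of the block algebra $\mathbf b$, so exactly one block $\mathbf B$ of $kG$ satisfies $e_{\mathbf b}e_{\mathbf B}=e_{\mathbf b}$ (this is the content of the ``embedding''), and then for any source idempotent $i$ of $\mathbf b$ one gets $i\mathbf Bi=ikGi=ie_{\mathbf b}kGe_{\mathbf b}i=i\mathbf bi$. That $i$ is still a source idempotent of $\mathbf B$ also needs the short checks that $i$ is primitive in $\mathbf B^D=e_{\mathbf B}(kG)^De_{\mathbf B}$ (which again uses $e_{\mathbf b}kGe_{\mathbf b}=\mathbf b$) and that $\mathrm{Br}_D(i)\neq 0$ in $kC_G(D)$, neither of which is what your Brauer-pair discussion establishes. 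In short: right destination and right bimodule, but the identified ``key input'' is misplaced and the ``immediate'' step is exactly where the argument lives.
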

\begin{proof}
See \cite[Theorem~$6.8.3$]{LinckBook}.
\end{proof}

The following lemma studies the behavior of $W( \mathbf{B} )$ in the situation
of \cite[Lemma~$4.3$ and Lemma~$4.4$]{Fei84}.

\begin{lem}
\label{Feit4.3and4.4}
Let $H \unlhd G$ be a normal subgroup and let~$\mathbf{B}_0$ be a $G$-stable 
$p$-block of~$H$ covered by~$\mathbf{B}$. Assume that~$D \cap H$ is a defect 
group of~$\mathbf{B}_0$. If $\{1\}\lneq D\cap H\leq D$, then 
$W( \mathbf{B}_0 )\cong  \Capp\left(\Res^{D}_{D \cap H}( W( \mathbf{B} ) ) \right)$ 
as $k[D\cap H]$-modules. In particular  $W( \mathbf{B}_0 )\cong k$ provided 
$W( \mathbf{B} )\cong  k$, and $W( \mathbf{B}_0 ) \cong W( \mathbf{B} )$ 
provided $D\cap H = D$. 
\end{lem}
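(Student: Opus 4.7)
My plan is to exploit Clifford theory applied to the Brauer correspondents of $\bB$ and $\bB_0$. Set $T := N_G(D_1)$ and $T_0 := N_H(D_1) = T \cap H$; then $T_0 \unlhd T$, and since $D$ is cyclic with $\{1\}\lneq D\cap H$, the subgroup $D\cap H$ contains $D_1$, so $(D\cap H)_1 = D_1$. Let $\bb$ and $\bb_0$ denote the Brauer correspondents of $\bB$ in $T$ and of $\bB_0$ in $T_0$, respectively. The crucial preliminary observation is that, by the compatibility of the Brauer correspondence with block covering and with stabilizers, the assumption that $\bB_0$ is $G$-stable forces $\bb_0$ to be $T$-stable and to be the unique block of $T_0$ covered by $\bb$.

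Next I would pick a simple $\bb$-module $S$; by the classical theory of cyclic blocks recalled in Subsection~\ref{ssec:cyclicBlocks}, $S$ has vertex $D$ and $kD$-source $W(\bB)$, hence is a direct summand of $\Ind^T_D(W(\bB))$. Since $T_0\unlhd T$, Clifford's theorem shows that $\Res^T_{T_0}(S)$ is semisimple, and by the preliminary observation every simple summand lies in $\bb_0$. Any such simple summand $S_0$ is a simple $\bb_0$-module and therefore has vertex $D\cap H$ and $k[D\cap H]$-source $W(\bB_0)$.

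Then I would compute the vertex-$D\cap H$ summands of $\Res^T_{T_0}\Ind^T_D(W(\bB))$ via the Mackey formula. Because $H\unlhd G$, one has $D^g\cap H=(D\cap H)^g$ for every $g\in T$, and hence $T_0\cap D^g=(D\cap H)^g$; this yields
\[
\Res^T_{T_0}\Ind^T_D(W(\bB))\;\cong\; \bigoplus_{g\in T_0\backslash T/D}\Ind^{T_0}_{(D\cap H)^g}\bigl((\Res^D_{D\cap H}(W(\bB)))^g\bigr).
\]
Since $S_0$ occurs as an indecomposable summand of this module with vertex $D\cap H$, its source must be isomorphic (up to the ambiguity inherent in the definition of a source) to an indecomposable summand of $\Res^D_{D\cap H}(W(\bB))$ of vertex $D\cap H$. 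As $W(\bB)$ is a capped endo-permutation $kD$-module, so is $\Res^D_{D\cap H}(W(\bB))$, and all its indecomposable summands of vertex $D\cap H$ are isomorphic to its cap. One concludes $W(\bB_0)\cong \Capp\bigl(\Res^D_{D\cap H}(W(\bB))\bigr)$, from which the two particular cases follow immediately.

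The step I expect to be the main obstacle is the preliminary one: carefully identifying $\bb_0$ as the unique block of $T_0$ covered by $\bb$ using the $G$-stability of $\bB_0$ together with the compatibility of the Brauer correspondence with the covering relation. This is classical, but spelling it out cleanly is the technical heart of the argument; once it is in place, the Mackey computation and the uniqueness of the cap deliver the result mechanically.
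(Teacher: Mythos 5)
Your proposal follows essentially the same route as the paper's own proof: pass to the Brauer correspondents $\bb$ and $\bb_0$ in $N_G(D_1)$ and $N_H(D_1)$, show that $\bb$ covers $\bb_0$ and that $\bb_0$ is $N_G(D_1)$-stable, apply Clifford theory to the restriction of a simple $\bb$-module, expand via Mackey, use $D^g\cap H = (D\cap H)^g$ (a consequence of $H\unlhd G$), and finish with the uniqueness of the cap of the restricted endo-permutation module. The one place where you wave your hands is precisely what you flag as the obstacle: the statement that ``compatibility of the Brauer correspondence with block covering'' forces $\bb$ to cover $\bb_0$ is the Harris--Kn\"orr correspondence (in Linckelmann's generalized form, \cite[Theorem~6.9.3]{LinckBook}), and this reference should be made explicit; the $T$-stability of $\bb_0$ then follows from uniqueness of the Brauer correspondent together with $G$-stability of $\bB_0$, exactly as in the paper. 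Once that is in place, your Mackey calculation and the cap argument coincide with the paper's.
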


\begin{proof}
Let~$\mathbf{b}_0$ denote the Brauer correspondent of~$\mathbf{B}_0$ 
in~$N_H( D_1 )$. By definition $W( \mathbf{B}_0 ) \cong W( \mathbf{b}_0 )$ 
and $W( \mathbf{B} ) \cong W( \mathbf{b} )$, hence it suffices to prove that 
$W( \mathbf{b}_0 ) \cong \Capp\left(\Res^{D}_{D \cap H}( W( \mathbf{b} ) ) \right)$. 
 \par
Since~$D$ is cyclic and $\{1\}\lneq D\cap H$,  we have~{$D_{1}\leq D\cap H\leq D$} 
and the following inclusions of subgroups:
\[
\begin{tikzcd}[row sep=0.1mm, column sep=small]
  G^{} \arrow[dd, dash]    \arrow[rd, dash, yshift=-0.2ex] &  & \\
  &  N_{G}(D_{1}) \arrow[dd, dash]   \arrow[rd, dash, yshift=-0.2ex]  & \\
  H^{}    \arrow[rd, dash, yshift=-0.2ex]   & & N_{G}(D \cap H) \arrow[dd, dash]   \\
  &  N_{H}(D_{1})  \arrow[rd, dash, yshift=-0.2ex] &  \\ 
  & &    N_{H}(D \cap H) \\
  & & &
\end{tikzcd}
\]
where $N_{H}(D_{1})=H \cap N_{G}(D_{1})\lhd N_{G}(D_{1})$. 
Therefore, by the generalized version of the Harris-Kn{\"o}rr correspondence 
(see \cite[Theorem]{HK85}) given in \cite[Theorem~$6.9.3$]{LinckBook}, there is 
a bijection
$$\mbox{Bl}_{p}(N_{G}(D_{1}) \mid \mathbf{b}_0 ) 
\overset{\sim}{\longleftrightarrow} \mbox{Bl}_{p}(G \mid \mathbf{B}_0)$$
given by the Brauer correspondence. Thus~$\mathbf{b}$ covers~$\mathbf{b}_0$. 
Moreover, it follows from the uniqueness of the Brauer correspondent 
that~$\mathbf{b}_{0}$ is $N_{G}(D_{1})$-stable (else~$\bB_0$ would not be 
$G$-stable).
\par
Now, let $M$ be a simple $\mathbf{b}$-module with $kD$-source $S$. By definition 
$W( \mathbf{b} )\cong S$, and  $M\mid \Ind_{D}^{N_{G}(D_{1})}(S)$. By Clifford 
theory,  any indecomposable direct summand 
$M_{0}$ of $\Res^{N_{G}(D_{1})}_{N_{H}(D_{1})}(M)$ is a simple 
$\mathbf{b}_{0}$-module. Moreover, 
\[
M_{0} \mid \Res^{N_{G}(D_{1})}_{N_{H}(D_{1})}(M)  \mid 
\Res^{N_{G}(D_{1})}_{N_{H}(D_{1})} \Ind_{D}^{N_{G}(D_{1})}(S),
\]
and the Mackey formula yields
\begin{equation}
\label{MackeyDec}
M_{0} \,\bigm|\, \bigoplus_{x\in[N_{H}(D_{1})\backslash N_{G}(D_{1}) / D]} 
\Ind_{^{x}\!D\cap N_{H}(D_{1})}^{N_{H}(D_{1})}
\Res^{^{x}\!D}_{^{x}\!D\cap N_{H}(D_{1})}(^{x}\!S)\,.
\end{equation}
Since~$N_{H}(D_{1}) \lhd N_{G}(D_{1})$, for any $x\in N_{G}(D_{1})$ we have 
\[
^{x}\!D\cap N_{H}(D_{1})\cong D\cap\,^{x^{-1}}\!N_{H}(D_{1})=D\cap N_{H}(D_{1})=D\cap H\,.
\]
\par
Moreover, as~$S$ is a capped endo-permutation $kD$-module, it follows that 
$\Res^{D}_{D\cap N_{H}(D_{1})}(S) =
\Res^{D}_{D\cap H}(S)\cong (S_{0})^{\oplus m}\oplus X_{0}$, 
where~$m$~is a positive integer,
$S_{0}:=\Capp\left( \Res^{D}_{D \cap H}( S ) \right)$ and all the indecomposable 
direct summands of $X_{0}$ have a vertex strictly contained in 
$D\cap N_{H}(D_{1})$. Then, as $M_{0}$ has vertex~$D \cap H$, it follows 
from~(\ref{MackeyDec}) that there exists $x\in N_{G}(D_{1})$ such that
$$ M_{0} \mid  \Ind_{^{x}\!D\cap N_{H}(D_{1})}^{N_{H}(D_{1})} ( ^{x}\!S_{0} )\,.$$
Therefore $(^{x}\!D\cap N_{H}(D_{1}), ^{x}\!S_{0} )$ is a vertex-source pair 
for~$M_{0}$. Thus, identifying $^{x}\!D\cap N_{H}(D_{1})$ with $D\cap H$, we 
obtain that $W( \mathbf{b}_{0} )\cong S_{0}$ as $k[D \cap H]$-modules, because 
the isomorphism class of an indecomposable $k[D\cap H]$-module is entirely 
determined by its $k$-dimension. The claim follows. 
\end{proof}

\begin{cor}
\label{Feit4.3Cor}
Let $H \unlhd G$ be a normal subgroup and let~$\mathbf{B}_0$ be a $p$-block 
of~$H$ covered by~$\mathbf{B}$. If $D \leq H$, then $W( \bB_0 ) \cong W( \bB )$.
\end{cor}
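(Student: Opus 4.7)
The plan is to reduce to Lemma~\ref{Feit4.3and4.4} via the Fong-Reynolds correspondence. In the hypotheses of that lemma we are missing two things: the $G$-stability of $\bB_0$ and the assertion that $D\cap H$ is a defect group of $\bB_0$. Both will be obtained by first passing to the stabilizer of~$\bB_0$.

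First, I would apply Lemma~\ref{FongReynolds} to the covered block $\bB_0$: set $T := \Stab_G(\bB_0)$, and let $\bB'$ be the unique $p$-block of~$T$ covering $\bB_0$ and corresponding to $\bB$ under the Fong-Reynolds bijection. By that lemma, $\bB'$ and $\bB$ are source algebra equivalent, so they share a common defect group, which we may take to be~$D$, and moreover $W(\bB') \cong W(\bB)$ (source algebra equivalence trivially preserves the $kD$-source of a simple module of the Brauer correspondent).

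Next, observe that $H \leq T$ (since~$H$ acts trivially by conjugation on its own blocks) and $H \unlhd T$ (being normal in the larger group~$G$). By construction $\bB_0$ is $T$-stable, and since $D \leq H \leq T$ we have $D \cap H = D$, which is non-trivial. It remains to see that $D$ is a defect group of~$\bB_0$: this is the standard fact that when~$\bB_0$ is stabilized by the overgroup and the defect group of the covering block is already contained in~$H$, the two blocks share that defect group (a special case of the compatibility of defect groups under block covering, as in Linckelmann's book).

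Finally, I would apply Lemma~\ref{Feit4.3and4.4} to $H \unlhd T$, the $T$-stable block $\bB_0$, and the covering block $\bB'$: since $D \cap H = D$, the ``in particular'' clause directly gives $W(\bB_0) \cong W(\bB')$. Combining with $W(\bB') \cong W(\bB)$ from the first step yields the corollary. The only slightly subtle ingredient is the identification of $D$ as a defect group of~$\bB_0$ after Fong-Reynolds, but this is routine once one is in the stabilized situation with $D \leq H$.
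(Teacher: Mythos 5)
Your proposal is correct and unwinds exactly the paper's one-line proof, which cites the same two lemmas (Fong--Reynolds reduction to the stabilizer, then Lemma~\ref{Feit4.3and4.4} applied to the now $T$-stable block~$\bB_0$, using $D \cap H = D$). The details you fill in, including the identification of~$D$ as a defect group of~$\bB_0$ in the stabilized situation, are the intended ones.
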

\begin{proof}
This is immediate from Lemmas~\ref{FongReynolds} and \ref{Feit4.3and4.4}.
\end{proof}

The above corollary supplements \cite[Lemma~$4.3$]{Fei84}. Let us 
introduce some notation to make this more precise.
Recall that $\sigma( \mathbf{B} )$ denotes the Brauer tree of~$\mathbf{B}$.

\begin{defn}
\label{StronglySimilar}
{\rm 
We call two cyclic $p$-blocks~$\mathbf{B}'$ and~$\mathbf{B}''$ 
\textit{strongly similar}, if they have the same defect, if their non-embedded
Brauer trees are similar in 
the sense of \cite[Section~$2$]{Fei84}, and if $W( \mathbf{B}' ) \cong 
W( \mathbf{B}'' )$, when the defect groups of~$\mathbf{B}'$ and~$\mathbf{B}''$ 
are identified.
}
\end{defn}

It is clear that source algebra equivalent blocks are strongly similar. Assume 
now the hypothesis of Corollary~\ref{Feit4.3Cor}. Then \cite[Lemma~$4.3$]{Fei84} 
and Corollary~\ref{Feit4.3Cor} imply that~$\mathbf{B}$ and~$\mathbf{B}_0$ are 
strongly similar. There is a situation in which these two blocks are even 
source algebra equivalent.

\begin{lem}
\label{Feit4.3Extended}
Assume the hypothesis of {\rm Lemma~\ref{Feit4.3and4.4}} and also that 
${D \leq H}$. Put
$$G[\mathbf{B}_0] := \{ g \in G \mid g \text{\ acts as inner automorphisms 
on\ } \mathbf{B}_0 \}.$$ 
Suppose that $G[\mathbf{B}_0] = G$. Then $\mathbf{B}$ and $\mathbf{B}_0$ are 
source algebra equivalent.
\end{lem}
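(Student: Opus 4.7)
My approach is to verify the Morita-equivalence criterion for source algebra equivalence given in \cite[Theorem~4.1]{Linck94}: I would exhibit a pair of bimodules between $\mathbf{B}$ and $\mathbf{B}_0$ inducing inverse Morita equivalences and having trivial sources as modules over the appropriate product groups.

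First, I would take the standard Clifford-theoretic bimodules $\mathbf{B}e_0$ and $e_0\mathbf{B}$, where $e_0\in Z(kH)$ is the block idempotent of~$\mathbf{B}_0$. Since $\mathbf{B}_0$ is $G$-stable, $e_0$ is $G$-invariant, and $\mathbf{B}_0$ is the unique block of~$H$ covered by $\mathbf{B}$, so $e_0\mathbf{B}e_0=\mathbf{B}_0$ and the bimodule tensor products $\mathbf{B}e_0\otimes_{\mathbf{B}_0}e_0\mathbf{B}$ and $e_0\mathbf{B}\otimes_{\mathbf{B}}\mathbf{B}e_0$ compute out to $\mathbf{B}$ and $\mathbf{B}_0$, respectively, giving a Morita equivalence; this step uses only $G$-stability of $\mathbf{B}_0$.

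The heart of the proof is showing that $\mathbf{B}e_0$ has trivial source as a $k[G\times H]$-module (and symmetrically for $e_0\mathbf{B}$). This is where the hypothesis $G[\mathbf{B}_0]=G$ enters. Fix coset representatives $g_1,\ldots,g_n$ for $G/H$; by hypothesis, for each $i$ we have a unit $u_{g_i}\in\mathbf{B}_0^{\times}$ that implements conjugation by $g_i$ on $\mathbf{B}_0$. I would then identify the decomposition $\mathbf{B}e_0=\bigoplus_i g_i\mathbf{B}_0$ with a direct sum of untwisted copies of $\mathbf{B}_0$ via the maps $g_i x\mapsto u_{g_i}^{-1}x$, using the $u_{g_i}$ to absorb the $g_i$-twist on the left. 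Restricting to the diagonal $D$-action (which makes sense because $D\leq H$), each summand becomes $\mathbf{B}_0$ endowed with its canonical $\Delta D$-action; since a source idempotent of $\mathbf{B}_0$ provides an explicit $\Delta D$-permutation decomposition of $\mathbf{B}_0$, each summand is a trivial-source $k[\Delta D]$-module, and hence so is $\mathbf{B}e_0$.

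The main obstacle will be the careful identification in the previous step: the cocycle describing the Clifford extension of $\mathbf{B}_0$ by $G/H$ need not vanish in general, but the hypothesis $G[\mathbf{B}_0]=G$ is precisely what allows it to be absorbed into the units $u_{g_i}$, turning the bimodule decomposition into a sum of untwisted copies of $\mathbf{B}_0$. Once this is established, \cite[Theorem~4.1]{Linck94} yields the source algebra equivalence between $\mathbf{B}$ and $\mathbf{B}_0$. The relevant machinery is developed in detail in \cite{LinckBook}, and the lemma should ultimately follow from (or be a clean reformulation of) one of the standard Clifford-theoretic source algebra theorems given there.
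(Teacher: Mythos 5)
Your overall strategy (produce bimodules, verify trivial source, invoke \cite[Theorem~4.1]{Linck94}) is reasonable in outline, and it is essentially what underlies the references the paper cites, namely \cite[Proposition~5 and Theorem~7]{K90} and \cite[Proposition~2.2]{KKL12}. However, the way you have distributed the hypotheses is backwards, and this hides a genuine gap.

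First, the assertion that $\mathbf{B}e_0$ and $e_0\mathbf{B}$ yield inverse Morita equivalences ``using only $G$-stability of $\mathbf{B}_0$'' is false. Take $G=\fS_3$, $H=C_3\unlhd G$, $p=3$: then $\mathbf{B}_0=kC_3$ is the unique (hence $G$-stable) block of $kH$, $\mathbf{B}=k\fS_3$ is the unique block of $kG$, $D=C_3\leq H$ and $D=D\cap H$ is a defect group of $\mathbf{B}_0$, so all your standing hypotheses except $G[\mathbf{B}_0]=G$ hold. But $\mathbf{B}$ has two simple modules and $\mathbf{B}_0$ has one, so they are not Morita equivalent. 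Here the involution acts on $kC_3$ by inversion, a non-inner automorphism, so $G[\mathbf{B}_0]=H\neq G$: the hypothesis $G[\mathbf{B}_0]=G$ is exactly what is needed to make the Morita equivalence hold, and you cannot defer it to the trivial-source step. Moreover, even under $G[\mathbf{B}_0]=G$, where one obtains a decomposition $\mathbf{B}\cong\mathbf{B}_0\otimes_k M_n(k)$, the bimodule $\mathbf{B}e_0=\mathbf{B}$ is a progenerator but is ``too big'': $\mathbf{B}e_0\otimes_{\mathbf{B}_0}e_0\mathbf{B}$ is not isomorphic to $\mathbf{B}$ as a $(\mathbf{B},\mathbf{B})$-bimodule, so these two bimodules do not literally induce inverse equivalences; one must pass to a suitable indecomposable direct summand of $\mathbf{B}e_0$.

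Second, the elaborate argument with the units $u_{g_i}$ to establish trivial source is unnecessary: $\mathbf{B}e_0=ekGe_0$ is a direct summand of $kG$ viewed as a $k[G\times H]$-module via $(g,h)\cdot x=gxh^{-1}$, and $kG$ is a permutation $k[G\times H]$-module on the basis $G$. Hence $\mathbf{B}e_0$ (and $e_0\mathbf{B}$) automatically have trivial source, with no use of $G[\mathbf{B}_0]=G$ whatsoever. The paper does not give a self-contained proof of this lemma; it simply cites \cite[Proposition~5 and Theorem~7]{K90} and \cite[Proposition~2.2]{KKL12}, where precisely the Morita equivalence (and its splendidness) is established from the hypothesis $G[\mathbf{B}_0]=G$. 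If you want to keep your direct approach, you need to (i) use $G[\mathbf{B}_0]=G$ to prove the Morita equivalence, e.g.\ via the tensor factorization of Külshammer, and (ii) identify the correct indecomposable Morita bimodule as a summand of $\mathbf{B}e_0$, after which the trivial-source property is free.
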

\begin{proof}
This follows from \cite[Proposition~$5$ and Theorem~$7$]{K90}; see also 
\cite[Proposition~$2.2$]{KKL12}.
\end{proof}

The following extends Linckelmann's version \cite[Theorem~$6.8.13$]{LinckBook} 
of Fong's second reduction in the context of cyclic blocks.
{Important to us is the fact that this result describes a Morita equivalence
induced by a bimodule with endo-permutation source. 
We give a proof which makes this endo-permutation module explicit.
}

\begin{prop}
\label{Fong2ndReduction}
Let $H \unlhd G$ and let~$\mathbf{B}_0$ be a $p$-block of~$H$ covered
by~$\mathbf{B}$. Suppose that~$\mathbf{B}_0$ is $G$-stable and of 
defect~$0$. Let~$V$ be the simple $\mathbf{B}_0$-module.

Then there is a natural action of~$D$ on~$V$, which gives~$V$ the structure of a
capped endo-permutation $kD$-module. Moreover, there is a central extension 
$\widehat{G/H}$ of $G/H$ with a center of order prime to~$p$, and a 
$p$-block~$\hat{\bB}$ of~$\widehat{G/H}$ with defect group~$\hat{D}$, such 
that~$\bB$ and~$\hat{\bB}$ are basic Morita equivalent. In particular,~$D$
and~$\hat{D}$ are isomorphic. 

If~$G/H$ is perfect,~$\widehat{G/H}$ can be chosen to be perfect as well.

Let~$M$ and $\hat{M}$ denote a simple $\bB$-module and a simple
$\hat{\bB}$-module corresponding under this Morita equivalence. Identify~$D$ 
with~$\hat{D}$ and let $W(M)$ respectively $W( \hat{M} )$ denote the $D$-sources
of~$M$ respectively~$\hat{M}$. Then $[W( M )] = [\Capp(V)] + [W( \hat{M} )]$ 
in~$\dade(D)$,
\end{prop}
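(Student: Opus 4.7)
The bulk of the statement---existence of $\widehat{G/H}$, the block~$\hat{\bB}$, and a basic Morita equivalence between $\bB$ and $\hat{\bB}$---is Linckelmann's formulation of Fong's second reduction, recorded as~\cite[Theorem~6.8.13]{LinckBook}. My plan is to invoke it and then to extract the additional information about the $D$-action on~$V$ and the Dade-group formula, which are not made explicit there. First I would identify~$D$ with $\hat{D}$: since $\bB_0$ has defect zero and $\bB$ covers $\bB_{0}$, standard block theory implies that $D\cap H$ is a defect group of~$\bB_0$, hence trivial. Therefore $D$ injects into $G/H$ via the quotient map, and this embedding lifts uniquely through the $p'$-central extension $\widehat{G/H}\to G/H$, identifying~$D$ with a subgroup $\hat{D}$ of~$\widehat{G/H}$.

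Next I would construct the $D$-action on~$V$. Since $\bB_0$ has defect zero, $\bB_0\cong\End_k(V)$ as $k$-algebras, and the $G$-stability of~$\bB_0$ provides, via Skolem--Noether, a projective representation $G\to\PGL(V)$ whose restriction to~$H$ is the given representation on~$V$. Passing to the central $p'$-extension $\widehat{G}$ of~$G$ with $\widehat{G}/H=\widehat{G/H}$, this lifts to a genuine $k\widehat{G}$-module~$\tilde{V}$ with $\Res^{\widehat{G}}_H\tilde{V}\cong V$, and the lift of~$D$ inside $\widehat{G}$ may be identified with the $\hat{D}$ of the previous step; restricting $\tilde{V}$ to this copy of~$D$ yields the natural $kD$-structure on~$V$. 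To see that this makes~$V$ a capped endo-permutation $kD$-module, I would read the structure off the bimodule inducing the Morita equivalence: by the very definition of a basic Morita equivalence, a $\bB$-$\hat{\bB}$-bimodule implementing the equivalence in~\cite[Theorem~6.8.13]{LinckBook} has an endo-permutation source at the diagonal defect group, and unpacking the construction of that bimodule as $\tilde{V}\otimes_k\hat{\bB}$ (with the appropriate twisted actions), together with a Mackey-type argument, shows that this source at~$D$ is $\Capp(V)$ itself.

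For the source formula, the Morita equivalence sends $\hat{M}$ to $M\cong\tilde{V}\otimes_k\hat{M}$ as $k\widehat{G}$-modules, with $H$ acting through the first factor and $\widehat{G/H}$ diagonally. Restricting to~$D$ and identifying vertex-source pairs gives
\[
W(M)\cong\Capp\bigl(V\otimes_k W(\hat{M})\bigr)
\]
as $kD$-modules, which via the composition law in $\dade(D)$ translates into $[W(M)]=[\Capp(V)]+[W(\hat{M})]$. Finally, if $G/H$ is perfect, one may replace the initial central extension $\widehat{G/H}$ by its derived subgroup, which is again a $p'$-central extension of $G/H$ and is perfect.

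The main obstacle I expect is the explicit identification of the bimodule inducing the Morita equivalence together with its $kD$-source: it is straightforward to produce the abstract Morita equivalence from Linckelmann's theorem, but one must verify that the source comes out as $\Capp(V)$ itself rather than some Heller translate, so that the formula in $\dade(D)$ holds with the correct sign conventions.
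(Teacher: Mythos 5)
Your route largely mirrors the paper's: Skolem--Noether yields a projective representation, a $2$-cocycle and a twisted group algebra $k_\alpha[G/H]$ realised via a $p'$-central extension, and the Morita equivalence comes from the explicit bimodule isomorphism $kG\otimes_{kH}\bB_0\cong \bB_0\otimes_k k_\alpha[G/H]$; the source formula then falls out of compatibility with the interior $D$-algebra structure. Invoking Linckelmann's Theorem~6.8.13 as a starting point is also what the paper does. (One small organizational difference: the paper works with a semidirect product $DH$, extending the $H$-action on $V$ to $DH$ uniquely via Feit~III,~Cor.~3.16, rather than lifting the whole of $G$ to a central extension $\widehat{G}$; both work.)

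However, there is a real gap in your argument that $V$ is a \emph{capped endo-permutation} $kD$-module. You propose to ``read this off the bimodule implementing the basic Morita equivalence,'' asserting that its source at the diagonal defect group ``is $\Capp(V)$ itself.'' But $\Capp(V)$ only makes sense once one already knows that $V$ is a capped endo-permutation $kD$-module; so as written the argument assumes what you set out to prove, and the ``Mackey-type argument'' is left unspecified. The paper proves this claim directly, before any appeal to the Morita equivalence: (i)~under the conjugation $D$-action, $\bB_0$ is a $kD$-direct summand of $kH$, hence a permutation $kD$-module, and $\End_k(V)\cong\bB_0$, so $V$ is endo-permutation once the $H$-action has been extended to $DH$; (ii)~cappedness follows from the Brauer construction: since $D$ is a defect group of $\bB$ covering $\bB_0$, the block idempotent $\iota$ satisfies $\Br_D(\iota)\neq 0$, and because $\bB_0$ has a $D$-permuted $k$-basis this forces a trivial $kD$-direct summand of $\End_k(V)$; by \cite[Theorem~3.1.9]{BensonBookI}, $V$ then has an indecomposable $kD$-summand of dimension prime to~$p$, hence of vertex~$D$. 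Relatedly, basicness of the equivalence is not simply ``by definition'' from Linckelmann's Theorem~6.8.13; the paper derives it from Puig's results. You should supply the step (i)--(ii) independently rather than extracting it from the bimodule.
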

\begin{proof}
As~$\mathbf{B}_0$ has defect~$0$, first observe that $D \cap H = \{ 1 \}$, so 
that we may identify~$D$ with a subgroup of $G/H$. 
Also $\mathbf{B}_0 \cong \End_{k}(V)$ as $k$-algebra. 
Secondly, as $H\unlhd G$, we have that $\bB_0$ is a $kD$-module 
under the conjugation action of $D$, and as such is a direct 
summand of~$kH$, hence  a permutation $kD$-module.
Moreover, the action of~$H$ on~$V$ extends in a unique way to an
action of the semidirect product~$DH$; see \cite[III, Corollary~$3.16$]{Feit}.
Thus, by definition,~$V$ is an endo-permutation $kD$-module.

We claim that the restriction of~$V$ to~$D$ contains an indecomposable direct
summand~$V_0$ with vertex~$D$. Indeed, let 
$\iota \in Z( kH )$ denote the primitive idempotent with 
$\bB_0 = \iota kH \iota$. As~$D$ is a defect group of 
the block~$\bB$ which covers~$\bB_0$, we have $\Br_D( \iota ) \neq 0$. 
As~$\bB_0$ has a $k$-basis which is permuted by~$D$, the image $\Br_D( \iota )$ 
is spanned by the $D$-fixed points of this basis. Thus $\bB_0 \cong \End_k( V )$ 
has a trivial direct summand as a $kD$-module. It follows from 
\cite[Theorem~$3.1.9$]{BensonBookI}, that $V$, as a $kD$-module, has an
indecomposable direct summand~$V_0$ of dimension prime to~$p$. In 
particular,~$V_0$ has vertex~$D$ and thus~$V$ is a capped endo-permutation 
$kD$-module.

By the Skolem--Noether
theorem, any $k$-algebra automorphism of $\End_k(V)$ is inner. Thus every
element $x \in G$ yields a unit $s_x \in \bB_0^\times$ such that 
$x t x^{-1} = s_x t s_x^{-1}$ for all $t \in \bB_0$. Moreover, for any 
$x, y \in H$, there is $\alpha(x,y) \in k^\times$ such that $s_{xy} = 
\alpha(x,y) s_xs_y$ with $\alpha(x,y) = 1$ whenever $x, y \in H$.
We thus obtain an element ${\alpha} \in Z^2( G/N, k^\times )$. We may
assume that the set of values of~$\alpha$ lies in a finite field; see,
e.g.\ \cite[Lemma~$11.38$]{CR1}.

Let~$k_\alpha[G/H]$ denote the twisted group algebra of~$G/H$ with respect 
to~$\alpha$, and let~$\widetilde{G/H}$ be the finite central extension of~$G/H$ 
corresponding to $\alpha \in Z^2( G/H, k^\times )$. If~$G/H$ is perfect, put
$\widehat{G/H} := [\widetilde{G/H},\widetilde{G/H}]$, the commutator subgroup 
of~$\widetilde{G/H}$. Otherwise, let $\widehat{G/H} := \widetilde{G/H}$.
Then~$\widehat{G/H}$ is a central extension of~$G/H$, which is perfect if~$G/H$ 
is, and 
$k[\widehat{G/H}]\epsilon \cong k_\alpha[G/H]$ as $k$-algebras for some central 
idempotent $\epsilon \in k[\widehat{G/H}]$; see 
\cite[Proposition~$10.5$]{ThevenazBook} and \cite[Theorem~$3.4$]{Sal}.

There is an isomorphism 
\begin{equation}
\label{CrucialIsomorphism0}
kG \otimes_{kH} \bB_0 \rightarrow \bB_0 \otimes_k k_\alpha[G/H];
\end{equation}
see \cite[Theorem~$6.8.13$]{LinckBook}. In fact,~(\ref{CrucialIsomorphism0}) is
an isomorphism of interior $D$-algebras, where the $D$-algebra structure on 
$k_\alpha[G/H]$ arises from the embedding of $D$ into $G/H$; see 
\cite[Proposition~$3.3$]{Sal}. We thus obtain an isomorphism 
\begin{equation}
\label{CrucialIsomorphism1}
\Phi: kG \otimes_{kH} \bB_0 \rightarrow \bB_0 \otimes_k k[\widehat{G/H}]\epsilon
\end{equation}
of interior $D$-algebras, where the  $D$-algebra structure on 
$k[\widehat{G/H}]\epsilon$ arises from an embedding of~$D$ into~$\widehat{G/H}$. 
Let~$\hat{D} \leq \widehat{G/H}$ denote the image of this embedding.
Now~$\bB$ is a block of $kG \otimes_{kH} \bB_0$, and 
hence there is a block $\widehat{\bB} = \widehat{\bB}\epsilon$ of $k[\widehat{G/H}]$, 
such that 
\begin{equation}
\label{CrucialIsomorphism2}
\Phi_{\bB} : \bB \rightarrow \bB_0 \otimes_k \widehat{\bB}
\end{equation}
is an isomorphism of interior $D$-algebras. 

Identifying $\bB$ and $\bB_0 \otimes_k \widehat{\bB}$ 
via~(\ref{CrucialIsomorphism2}), we obtain a Morita equivalence 
$\mbox{\rm mod-}\widehat{\bB} \rightarrow \mbox{\rm mod-}\bB$, which is
basic by \cite[Subsection~$7.1$ and Corollary~$7.4$]{PuigBook}. This implies 
in particular, that
the image~$\hat{D}$ of~$D$ under~$\Phi_{\bB}$ is a defect group of~$\hat{\bB}$.
Under this Morita equivalence,~$\hat{M}$
is sent to $V \otimes_k \hat{M}$. Thus, as~(\ref{CrucialIsomorphism2}) is an
isomorphism of interior $D$-algebras, this yields our claim about the sources
of~$M$ and $\hat{M}$.
\end{proof}

\section{Preliminaries on groups of Lie type}

In order to investigate Question~\ref{QuestionA}(c), we need to introduce
some concepts and notation from the theory of finite groups of Lie type,
where we largely follow the book \cite{GeMa} of Geck and Malle.
Let~$r$ be a prime. Let $\mathbf{G}$ denote a connected reductive 
algebraic group over $\bar{\mathbb{F}}_r$, and let~$F$ be a Steinberg 
morphism of~$\mathbf{G}$. If~$\mathbf{H}$ is a closed subgroup of~$\mathbf{G}$,
we write $\mathbf{H}^\circ$ for its connected component, and if~$\mathbf{H}$ is
also $F$-stable, we write $H := \mathbf{H}^F$ for the finite
group of $F$-fixed points of~$\mathbf{H}$. By~$\mathbf{G}^*$ we denote a 
connected reductive algebraic group dual to~$\mathbf{G}^*$, equipped with a 
dual Steinberg morphism, which is also denoted by~$F$. An $F$-stable Levi
subgroup of~$\bG$ is called a \textit{regular subgroup of}~$\bG$.

We record the following factorization lemma.

\begin{lem}
\label{FactorizationLemma}
Let $\mathbf{G}$ denote a connected reductive algebraic group over
$\bar{\mathbb{F}}_r$, and let~$F$ be a Steinberg morphism of~$\mathbf{G}$.

Then $\mathbf{G} = \mathbf{Z}\mathbf{H}$, with 
$\mathbf{Z} = Z( \mathbf{G} )^\circ$ and $\mathbf{H} = [\mathbf{G},\mathbf{G}]$.
Moreover, $|G| = |Z| \cdot |H|$. In particular, $ZH \unlhd G$ with 
$[G\colon\!ZH] = |Z \cap H|$, and $|Z \cap H|$ divides $|Z( \mathbf{H} )|$.
\end{lem}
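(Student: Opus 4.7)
The plan is to reduce the lemma to two standard ingredients: the structure theory of connected reductive groups, which gives the decomposition $\mathbf{G} = \mathbf{Z}\mathbf{H}$ for free, and the Lang--Steinberg theorem applied to the multiplication isogeny $\mathbf{Z} \times \mathbf{H} \to \mathbf{G}$, which will yield the order identity $|G| = |Z|\cdot|H|$. Everything else will follow by elementary group theory.

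First I would invoke the standard fact (as recorded in \cite{GeMa}) that for any connected reductive $\mathbf{G}$ the connected center $\mathbf{Z} = Z(\mathbf{G})^\circ$ is a torus, the derived subgroup $\mathbf{H} = [\mathbf{G},\mathbf{G}]$ is semisimple, $\mathbf{G} = \mathbf{Z}\mathbf{H}$, and $\mathbf{Z} \cap \mathbf{H}$ is a finite subgroup of $Z(\mathbf{H})$. This gives the decomposition $\mathbf{G} = \mathbf{Z}\mathbf{H}$ at once. It also takes care of the final assertion: since $Z \cap H = (\mathbf{Z} \cap \mathbf{H})^F$ is a subgroup of the finite group $Z(\mathbf{H})$, Lagrange's theorem immediately forces $|Z \cap H|$ to divide $|Z(\mathbf{H})|$.

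The main step is the order formula $|G| = |Z|\cdot|H|$. To prove it I would consider the surjective $F$-equivariant homomorphism of algebraic groups
\[
\pi\colon \mathbf{Z} \times \mathbf{H} \longrightarrow \mathbf{G}, \qquad (z,h) \longmapsto zh,
\]
whose kernel $\mathbf{A} := \{(a,a^{-1}) : a \in \mathbf{Z} \cap \mathbf{H}\} \cong \mathbf{Z} \cap \mathbf{H}$ is finite, abelian, and $F$-stable. From the associated short exact sequence I would pass to the long exact sequence of Galois cohomology. The Lang--Steinberg theorem applied to the connected group $\mathbf{Z} \times \mathbf{H}$ gives $H^1(F, \mathbf{Z} \times \mathbf{H}) = 1$, and for a finite abelian $F$-module one has the standard Herbrand-type identity $|H^1(F,\mathbf{A})| = |\mathbf{A}^F|$, coming from the fact that $1 - F$ has kernel and cokernel of equal cardinality on $\mathbf{A}$. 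Comparing orders along the sequence
\[
1 \longrightarrow \mathbf{A}^F \longrightarrow Z \times H \longrightarrow G \longrightarrow H^1(F,\mathbf{A}) \longrightarrow 1
\]
then yields $|G| = |Z|\cdot|H|$.

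To finish, both $\mathbf{Z}$ and $\mathbf{H}$ are characteristic in $\mathbf{G}$, so $Z$ and $H$ are normal in $G$, and hence so is $ZH$. Combining the product formula $|ZH| = |Z|\,|H|/|Z \cap H|$ with the order identity just established gives $[G\colon\!ZH] = |Z \cap H|$ immediately. The only delicate point in the whole argument is the cohomological order computation in the middle step; everything else is a direct appeal to the structure theorem or purely formal group theory.
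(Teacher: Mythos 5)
Your proof is correct and takes essentially the same route as the paper: both hinge on the multiplication map $\mathbf{Z}\times\mathbf{H}\to\mathbf{G}$ being a surjective $F$-equivariant morphism with finite central kernel, from which $|G|=|Z|\cdot|H|$ and then the remaining assertions follow by elementary group theory. The only difference is that where the paper simply cites \cite[Proposition~$1.4.13$]{GeMa} for the order identity, you re-derive it via Lang--Steinberg and the $|\ker(F-1)|=|\mathrm{coker}(F-1)|$ identity for a finite abelian $F$-group, which is exactly the proof of the cited proposition.
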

\begin{proof}
The factorization $\mathbf{G} = \mathbf{Z}\mathbf{H}$ is standard.
The multiplication map $\mathbf{Z} \times \mathbf{H} \rightarrow \mathbf{G}$ is 
a surjective, $F$-equivariant morphism of algebraic group with a finite kernel, 
and thus $|G| = |Z| \cdot |H|$; see \cite[Proposition~$1.4.13$]{GeMa}.
\end{proof}

\begin{cor}
\label{FactorizationCorollary}
Let the notation be as in {\rm Lemma~\ref{FactorizationLemma}}. Let~$p$ be a 
prime dividing~$|Z|$ but not~$|Z( \mathbf{H} )|$. Let~$D$ be a cyclic 
$p$-subgroup of~$G$ containing the Sylow $p$-subgroup~$P$ of~$Z$. Then $D = P$.
\end{cor}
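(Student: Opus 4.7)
The plan is to decompose $D$ as an internal direct product $D = P \times (D \cap H)$ and then use the fact that $D$ is cyclic to force $D \cap H = 1$, so that $D = P$.

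First I would verify $D \leq ZH$. By Lemma~\ref{FactorizationLemma}, $[G\colon\!ZH] = |Z \cap H|$ divides $|Z(\mathbf{H})|$, and $p \nmid |Z(\mathbf{H})|$ by hypothesis, so $p \nmid [G\colon\!ZH]$; hence the $p$-subgroup~$D$ is contained in~$ZH$.

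The key step is to show that every $d \in D$ has the form $d = z h$ with $z \in P$ and $h$ a $p$-element of~$H$. Because $\mathbf{Z} \leq Z(\mathbf{G})$, the subgroups~$Z$ and~$H$ centralize each other. Given any decomposition $d = zh$ with $z \in Z$, $h \in H$, writing the $p$-/$p'$-parts $z = z_p z_{p'}$, $h = h_p h_{p'}$ and using commutativity of $z$ and $h$, I can rewrite $d = (z_p h_p)(z_{p'} h_{p'})$ as a product of two commuting elements, one of $p$-power order and one of order coprime to~$p$. Uniqueness of the $p$-$p'$ decomposition of~$d$, which is itself a $p$-element, then forces $d = z_p h_p$ with $z_p \in P$ and $h_p$ a $p$-element of~$H$. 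Since $P \leq D$ gives $z_p \in D$, we deduce $h_p = z_p^{-1} d \in D \cap H$, and therefore $D = P \cdot (D \cap H)$.

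Finally, the intersection $P \cap (D \cap H) = P \cap H$ is a $p$-subgroup of $Z \cap H$, whose order is coprime to~$p$; hence $P \cap H = 1$ and $D = P \times (D \cap H)$ internally. A non-trivial cyclic $p$-group has a totally ordered lattice of subgroups and therefore admits no non-trivial internal direct product decomposition. Since $P \neq 1$ (because $p \mid |Z|$), this forces $D \cap H = 1$ and thus $D = P$. No step is genuinely hard; the only point requiring care is the $p$-part decomposition of $d$, which rests on $Z$ and $H$ centralizing each other and on $[G\colon\!ZH]$ being coprime to~$p$.
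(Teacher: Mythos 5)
Your proof is correct and follows essentially the same approach as the paper: reduce to showing that $D$ decomposes as an internal direct product $P \times (D \cap H)$ and then use the fact that a nontrivial cyclic $p$-group admits no nontrivial direct product decomposition. The paper phrases this via the structure of Sylow $p$-subgroups of $G$ (each of the form $P \times Q$ with $Q$ a Sylow $p$-subgroup of $H$), while you obtain the splitting $D = P \times (D\cap H)$ directly by the $p$-/$p'$-part decomposition inside the normal subgroup $ZH$; these are the same idea with slightly different bookkeeping.
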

\begin{proof}
Lemma~\ref{FactorizationLemma} implies that every Sylow $p$-subgroup of~$G$ is 
of the form $P \times Q$ for some Sylow $p$-subgroup~$Q$ of~$H$. As $P \leq D$,
this yields our claim.
\end{proof}

Choose an $F$-stable maximally split
torus~$\mathbf{T}$ of~$\mathbf{G}$ and consider the character group
$X := X( \mathbf{T} )$, a free abelian group of finite rank. The action of~$F$
on~$\mathbf{T}$ induces a linear map~$\varphi$ on~$\mathbb{R} \otimes_\mathbb{Z} X$,
which factors as $\varphi = q \varphi_0$ for a positive real number~$q$ and
a linear map $\varphi_0$ of finite order; see 
\cite[Proposition~$1.4.19$(b)]{GeMa}. Notice that~$q$ is the absolute value
of all eigenvalues of~$\varphi$, and thus~$q$ and~$\varphi_0$ are uniquely 
determined by the pair $(\mathbf{G},F)$. Moreover, $q^d = r^f$ for positive 
integers $d, f$; see \cite[Proposition~$1.4.19$]{GeMa}. If~$q$ is an integer, 
it is a power of~$r$. In \cite[Section~$1$A]{BrouMa}, Brou{\'e} and Malle define
the concept of a \textit{complete root datum}, also called a \textit{generic 
finite reductive group}; for a slightly more general definition see 
\cite[Definition~$1.6.10$]{GeMa}. Associated 
with the pair $(\mathbf{G},F)$ is a generic finite reductive group~$\mathbb{G}$; 
see \cite[Example~$1.6.11$]{GeMa}. Conversely, if~$q$ is an integer, 
$(\mathbf{G},F)$ can be constructed from~$\mathbb{G}$ as explained in 
\cite[Section~$2$]{BrouMa}. Attached to~$\mathbb{G}$ is a real polynomial, 
called the \textit{order polynomial of}~$\mathbb{G}$; for its definition see 
\cite[D\'{e}finition~$1.9$]{BrouMa} or \cite[Definition~$1.6.10$]{GeMa} and for
its relevance see \cite[Th{\'e}or{\`e}me~$2.2$]{BrouMa} or 
\cite[Remark~$1.6.15$]{GeMa}. By a slight abuse of language, the order 
polynomial of~$\mathbb{G}$ is also called the order polynomial of $(\bG,F)$.

We now cite the condition~{\rm ($\ast$)} formulated 
in~\cite[Subsection~$2.1$]{MalleAb}. Assume that the positive real number~$q$ 
arising from $(\mathbf{G},F)$ as above is an integer. Let~$p$ be a prime 
different from~$r$. Then~$p$ satisfies condition~{\rm ($\ast$)} with respect to 
$(\mathbf{G},F)$, if there is a unique integer~$d$ such that 
$p \mid \Phi_d(q)$ and~$\Phi_d$ divides the order polynomial of $(\bG,F)$. 
Here,~$\Phi_d$ denotes the $d$th cyclotomic polynomial over~$\mathbb{Q}$. If~$p$ 
satisfies~{\rm ($\ast$)}, then~$p$ is odd and good for~$\mathbf{G}$ and the Sylow
$p$-subgroup of~$G$ are abelian; see~\cite[Lemma~$2.1$ and Proposition~$2.2$]{MalleAb}.

\begin{lem}
\label{LargeEll}
Let $\mathbf{G}$ denote a connected reductive algebraic group over 
$\bar{\mathbb{F}}_r$, and let~$F$ be a Steinberg morphism of~$\mathbf{G}$. 
Assume that the positive real number~$q$ associated to $(\mathbf{G},F)$ as in 
\cite[Proposition~$1.4.19$]{GeMa} is an integer (which then is a power of~$r$).
Let $p$ be a prime different from~$r$ which satisfies condition~{\rm ($\ast$)} 
given in~\cite{MalleAb}. Assume in addition that~$p$ does not divide
$|Z(\mathbf{G})/Z(\mathbf{G})^\circ||Z(\mathbf{G}^*)/Z(\mathbf{G}^*)^\circ|$.

Let $D \leq G$ denote a non-trivial cyclic radical $p$-subgroup of~$G$ of 
order~$p^l$. Then $C_{\mathbf{G}}( D ) = C_{\mathbf{G}}( D_i )$ for every $1 \leq i \leq l$.
\end{lem}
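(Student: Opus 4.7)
My plan is to reduce the whole chain of equalities to the single equality $C_{\mathbf{G}}(D) = C_{\mathbf{G}}(D_1)$, then to use condition~$(\ast)$ to embed~$D$ inside a Sylow $\Phi_d$-torus, and finally to exploit the radicality of~$D$ together with the structure of~$\mathbf{G}$ to force this equality.

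Since~$D$ is cyclic, $D_1 \subseteq D_i \subseteq D$ for each $1 \leq i \leq l$, giving the reversed chain $C_{\mathbf{G}}(D) \subseteq C_{\mathbf{G}}(D_i) \subseteq C_{\mathbf{G}}(D_1)$; it thus suffices to prove $C_{\mathbf{G}}(D_1) \subseteq C_{\mathbf{G}}(D)$. Next, I would invoke condition~$(\ast)$ together with the Brou\'e--Malle theory of $\Phi_d$-Sylow tori, as used in~\cite{MalleAb}, to arrange after $G$-conjugation that $D \subseteq P \subseteq \mathbf{S}^F \subseteq \mathbf{S}$, where~$P$ is the abelian Sylow $p$-subgroup of~$G$ and~$\mathbf{S}$ is a Sylow $\Phi_d$-torus of~$\mathbf{G}$. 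The hypothesis $p \nmid |Z(\mathbf{G})/Z(\mathbf{G})^\circ| \cdot |Z(\mathbf{G}^*)/Z(\mathbf{G}^*)^\circ|$, together with the fact that~$p$ is good (a consequence of~$(\ast)$), then ensures via Steinberg's theorem on centralizers of semisimple elements that $\mathbf{L} := C_{\mathbf{G}}(D_1)$ is a connected Levi subgroup of~$\mathbf{G}$ containing~$\mathbf{S}$; a $\Phi_d$-multiplicity count on the order polynomials shows furthermore that~$\mathbf{S}$ remains a Sylow $\Phi_d$-torus of~$\mathbf{L}$, so that~$P$ is also a Sylow $p$-subgroup of~$\mathbf{L}^F$.

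The core of the argument, and its main obstacle, is then to show that every element of~$\mathbf{L}$ centralizes not only~$D_1$ but the whole of~$D$. Translated root-theoretically, this amounts to excluding the existence of a root~$\alpha$ of~$\mathbf{G}$ that is trivial on~$D_1$ but non-trivial on~$D$: such an~$\alpha$ would produce an element of $\mathbf{L}^F \setminus C_G(D)$ acting on~$D$ by a non-trivial $p$-automorphism fixing~$D_1$, and, combining this action with the abelianness of~$P$ and the equality of the Sylow $p$-subgroups of~$G$ and of~$\mathbf{L}^F$, I would aim to exhibit a normal $p$-subgroup of $N_G(D)$ strictly larger than~$D$, contradicting the radicality assumption $O_p(N_G(D)) = D$. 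Once this is established, $\mathbf{L} \subseteq C_{\mathbf{G}}(D)$, which by the first step collapses the whole chain and yields $C_{\mathbf{G}}(D) = C_{\mathbf{G}}(D_i)$ for every $1 \leq i \leq l$.
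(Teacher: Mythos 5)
Your reduction to $i=1$, and the observations that $\mathbf{L}_1 := C_{\mathbf{G}}(D_1)$ is a regular subgroup (via \cite[Corollary~2.6]{GeHi1}) and that $D$ lands inside a Sylow $\Phi_d$-torus, match the paper's opening moves. But the core step of your proof is not carried out, and the mechanism you sketch for it would not work. You claim that a root $\alpha$ with $\alpha|_{D_1}=1$ and $\alpha|_D\neq 1$ would ``produce an element of $\mathbf{L}^F\setminus C_G(D)$ acting on~$D$ by a non-trivial $p$-automorphism fixing~$D_1$.'' Elements of a root subgroup~$U_\alpha$ do not normalize the torus containing~$D$, so they do not act on~$D$ at all, and certainly not by $p$-automorphisms; the whole normalizer argument that follows (``exhibit a normal $p$-subgroup of $N_G(D)$ strictly larger than~$D$'') is therefore unsubstantiated, and you yourself only write that you would \emph{aim} to do this. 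As it stands, the proposal reduces the lemma to a sub-claim that is neither proved nor plausibly provable by the stated mechanism.

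The paper's argument is structurally different and worth internalizing. It sets $\mathbf{L} := C_{\mathbf{G}}(D)$ and $\mathbf{L}_1 := C_{\mathbf{G}}(D_1)$, shows both are regular subgroups, and works with the Sylow $\Phi_d$-tori $\mathbf{T}_d \leq Z(\mathbf{L})^\circ$ and $\mathbf{T}_d' \leq Z(\mathbf{L}_1)^\circ$ rather than with a Sylow $\Phi_d$-torus of~$\mathbf{G}$. Radicality of~$D$ enters positively, not by contradiction: Lemma~\ref{AbelianRadicalSubgroup} gives $D = O_p(Z(L))$, and the cyclicity of~$D$ then forces $\mathbf{T}_d$ to have rank exactly $\deg(\Phi_d)$, whence $\mathbf{T}_d' = \mathbf{T}_d$ by a rank comparison inside the unique Sylow $\Phi_d$-torus of $Z(\mathbf{L})^\circ$. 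The hypothesis $p\nmid |Z(\mathbf{G})/Z(\mathbf{G})^\circ|$ is used to push $D$ and $D_1$ into $Z(\mathbf{L})^\circ$ and $Z(\mathbf{L}_1)^\circ$ respectively, and condition~($\ast$) is used to show $D\leq \mathbf{T}_d^F$. Finally, $C_{\mathbf{G}}(D) = C_{\mathbf{G}}(\mathbf{T}_d)$ and $C_{\mathbf{G}}(D_1) = C_{\mathbf{G}}(\mathbf{T}_d')$ because each of $\mathbf{L}$, $\mathbf{L}_1$ is the centralizer of the connected center of itself, so $\mathbf{T}_d'=\mathbf{T}_d$ yields the result. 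If you want to salvage a root-theoretic variant, you would need to replace the fictitious ``$p$-automorphism'' with an actual comparison of tori or of order polynomials; the paper's torus comparison is the clean way to do this.
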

\begin{proof}
It suffices to prove the claim for $i = 1$.
Put $\mathbf{L} := C_{\mathbf{G}}( D )$ and 
$\mathbf{L}_1 := C_{\mathbf{G}}( D_1 )$. Then~$\mathbf{L}$ and~$\mathbf{L}_1$ 
are regular subgroups of~$\mathbf{G}$ as $p$ is good for~$\mathbf{G}$ and 
does not divide $|Z(\mathbf{G}^*)/Z(\mathbf{G}^*)^\circ|$; see 
\cite[Corollary $2.6$]{GeHi1}. Clearly,  
$\mathbf{L} \leq \mathbf{L}_1$ and $Z( \mathbf{L}_1 ) \leq Z( \mathbf{L} )$ as
$D \leq \mathbf{L}_1$. Also, $D = O_{p}( Z( L ) )$ by 
Lemma~\ref{AbelianRadicalSubgroup}.

There is a surjection $Z( \mathbf{G} )/Z(\mathbf{G})^\circ \rightarrow 
Z( \mathbf{L} )/Z( \mathbf{L} )^\circ$; see \cite[Pro\-position~$4.2$]{CeBo2}. 
By assumption, $p \nmid |Z( \mathbf{G} )/Z( \mathbf{G} )^\circ|$, and hence 
$D \leq (Z( \mathbf{L} )^\circ)^F$. Analogously, 
$D_1 \leq (Z( \mathbf{L}_1 )^\circ)^F$.

Let~$d$ denote the order of~$q$ modulo~$p$, and let $\mathbf{T}_d$ denote the 
Sylow $\Phi_d$-torus of $Z( \mathbf{L} )^\circ$; for the definition of a Sylow 
$\Phi_d$-torus see \cite[p.~$254$]{BrouMa}. Since $O_{p}( Z( L ) ) = D$ is 
cyclic, the order of $\mathbf{T}_d^F$ equals $\Phi_d(q)$ and the rank 
of~$\mathbf{T}_d$ equals $\deg( \Phi_d )$; see \cite[Proposition~$3.3$]{BrouMa}.

From $Z( \mathbf{L}_1 ) \leq Z( \mathbf{L} )$ we obtain
$Z( \mathbf{L}_1 )^\circ \leq Z( \mathbf{L} )^\circ$. Let $\mathbf{T}_d'$ denote the
Sylow $\Phi_d$-torus of $Z( \mathbf{L}_1 )^\circ$. Then $\mathbf{T}_d' \leq \mathbf{T}_d$,
since $\mathbf{T}_d'$ lies in a Sylow $\Phi_d$-torus of $Z( \mathbf{L} )^\circ$, and
$\mathbf{T}_d$ is the unique Sylow $\Phi_d$-torus of $Z( \mathbf{L} )^\circ$;
see \cite[Th{\'e}or{\`e}me~$3.4(3)$]{BrouMa}.
Clearly, $\mathbf{T}_d'$ is nontrivial, as ${D_1 \leq (Z( \mathbf{L}_1 )^\circ)^F}$.
In particular, the rank of $\mathbf{T}_d'$ is at least equal to $\deg( \Phi_d )$.
It follows that $\mathbf{T}_d' = \mathbf{T}_d$.

Let~$d'$ be an integer such that $\Phi_{d'}$ divides the order polynomial of
$(Z( \mathbf{L} )^\circ,F)$. Then $\Phi_{d'}$ also divides the order polynomial 
of $(\mathbf{G},F)$; see \cite[Proposition~$1.11$]{BrouMa}.
By hypothesis, if $p \mid \mathbf{T}_{d'}^F$, then 
${d' = d}$. It follows that $p \nmid 
[(Z( \mathbf{L} )^\circ)^F\colon\!\mathbf{T}_d^F]$ and hence $D \leq \mathbf{T}_d^F$.
Now $D \leq \mathbf{T}_d \leq Z( \mathbf{L} )^\circ$, and thus 
$C_{\mathbf{G}}( D ) \geq C_{\mathbf{G}}( \mathbf{T}_d ) \geq 
C_{\mathbf{G}}( Z( \mathbf{L} )^\circ ) = \mathbf{L} = C_{\mathbf{G}}( D )$, as 
$\mathbf{L}$ is a regular subgroup of~$\mathbf{G}$. We conclude that 
$C_{\mathbf{G}}( D ) = C_{\mathbf{G}}( \mathbf{T}_d )$. Analogously,
$C_{\mathbf{G}}( D_1 ) = C_{\mathbf{G}}( \mathbf{T}_d' )$. Our claim follows
from $\mathbf{T}_d' = \mathbf{T}_d$.
\end{proof}

The result of Lemma~\ref{LargeEll} can be generalized to the case when the 
centralizer of~$D$ is contained in a proper $F$-stable Levi subgroup 
of~$\mathbf{G}$.

\begin{cor}
\label{LargeEllGen}
Let $\mathbf{G}$ denote a connected reductive algebraic group over
$\bar{\mathbb{F}}_r$, and let~$F$ be a Steinberg morphism of~$\mathbf{G}$.
Assume that the positive real number~$q$ associated to $(\mathbf{G},F)$ as in
\cite[Proposition~$1.4.19$]{GeMa} is an integer (which then is a power of~$r$).

Let~$p$ be a prime different from~$r$ and let $D \leq G$ denote a non-trivial 
cyclic $p$-subgroup of~$G$ of order~$p^l$. Suppose that there is an $F$-stable 
Levi subgroup $\mathbf{L} \leq \mathbf{G}$ such 
$C_{\mathbf{G}}( D_1 ) \leq \mathbf{L}$, that~$p$ satisfies the 
condition~{\rm ($\ast$)} with respect to~$(\mathbf{L},F)$ and that $p$ does not 
divide
$|Z(\mathbf{L})/Z(\mathbf{L})^\circ||Z(\mathbf{L}^*)/Z(\mathbf{L}^*)^\circ|$. 
Suppose finally that~$D$ is a radical subgroup of~$L$.
Then $C_{\mathbf{G}}( D ) = C_{\mathbf{G}}( D_i )$ for every $1 \leq i \leq l$.
\end{cor}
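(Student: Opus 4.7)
The plan is to reduce the statement to a direct application of Lemma~\ref{LargeEll} applied to the regular subgroup $\mathbf{L}$ of $\mathbf{G}$ (in place of $\mathbf{G}$ itself). The hypotheses of that lemma list exactly the conditions needed on $(\mathbf{L}, F)$, so the main work is to transfer the centralizer computation from inside $\mathbf{L}$ back to $\mathbf{G}$.

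First I would check that $(\mathbf{L}, F)$ is entitled to the invocation of Lemma~\ref{LargeEll}. The prime $p$ satisfies $(\ast)$ with respect to $(\mathbf{L},F)$ and avoids $|Z(\mathbf{L})/Z(\mathbf{L})^\circ||Z(\mathbf{L}^*)/Z(\mathbf{L}^*)^\circ|$ by hypothesis. The positive real number $q$ attached to $(\mathbf{L},F)$ via \cite[Proposition~$1.4.19$]{GeMa} coincides with the one attached to $(\mathbf{G},F)$: choosing an $F$-stable maximally split torus of $\mathbf{L}$ inside an $F$-stable maximally split torus of $\mathbf{G}$ realizes the linear map for $\mathbf{L}$ as the restriction of the one for $\mathbf{G}$ to a $\varphi$-stable subspace of $\mathbb{R}\otimes_{\mathbb{Z}} X(\mathbf{T})$, so its eigenvalues still have absolute value $q$, which is an integer by assumption. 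Finally, $D$ is cyclic, nontrivial and, by hypothesis, a radical $p$-subgroup of $L$.

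The second step is to place all the relevant subgroups and centralizers inside $\mathbf{L}$. Since $D$ is cyclic, it is abelian, so $D \leq C_{\mathbf{G}}(D_1)^F \leq \mathbf{L}^F = L$, and hence every $D_i \leq D \leq L$. Moreover, for each $1 \leq i \leq l$, the containment $D_1 \leq D_i$ forces $C_{\mathbf{G}}(D_i) \leq C_{\mathbf{G}}(D_1) \leq \mathbf{L}$, and likewise $C_{\mathbf{G}}(D) \leq \mathbf{L}$. This yields the identifications $C_{\mathbf{G}}(D_i) = C_{\mathbf{L}}(D_i)$ and $C_{\mathbf{G}}(D) = C_{\mathbf{L}}(D)$.

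Lemma~\ref{LargeEll} applied to $(\mathbf{L},F)$ and $D$ then gives $C_{\mathbf{L}}(D) = C_{\mathbf{L}}(D_i)$ for $1 \leq i \leq l$, and combining this with the identifications of the previous step yields the desired equality $C_{\mathbf{G}}(D) = C_{\mathbf{G}}(D_i)$. I expect no serious obstacle: the slightly delicate point is verifying that the integrality of $q$ is inherited by $(\mathbf{L},F)$, but this is standard once a compatible choice of $F$-stable maximally split torus is made. Everything else is a bookkeeping consequence of the hypothesis $C_{\mathbf{G}}(D_1) \leq \mathbf{L}$.
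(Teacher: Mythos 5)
Your proposal is correct and follows essentially the same route as the paper: apply Lemma~\ref{LargeEll} to the regular subgroup $\mathbf{L}$ and then transfer the resulting equality of centralizers back to $\mathbf{G}$ using the hypothesis $C_{\mathbf{G}}(D_1)\leq\mathbf{L}$, which forces $C_{\mathbf{G}}(D_i)=C_{\mathbf{L}}(D_i)$ for all $i$. You spell out a couple of verifications the paper leaves implicit (the inheritance of the integer $q$ and the explicit containments of centralizers in $\mathbf{L}$), but the underlying argument is identical.
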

\begin{proof}
By Lemma~\ref{LargeEll} applied to~$\mathbf{L}$, we obtain 
$C_{\mathbf{L}}( D ) = C_{\mathbf{L}}( D_1 )$. Hence $C_{\mathbf{G}}( D ) = 
C_{\mathbf{L}}( D ) = C_{\mathbf{L}}( D_1 ) = C_{\mathbf{G}}( D_1 )$.
This implies the assertion for all $1 \leq i \leq l$.
\end{proof}

\section{The quasisimple groups}\label{sec:W(B)quasisimple}

We are now going to address Question~\ref{QuestionA}(c). Throughout this 
section,~$G$ denotes a quasisimple group, ~$S=G/Z(G)$ its simple quotient,
and~$\mathbf{B}$ is a $p$-block of~$G$ with a non-trivial 
cyclic defect group~$D$. 

\subsection{Some particular groups and special cases}
We begin with the case in which~$S$ is a sporadic simple group or 
the Tits simple group.

\begin{prop}
If $S$ is one of the~$26$ simple sporadic groups or the Tits simple group 
$\tw{2}F_4(2)'$, then $W(\bB)\cong k$. 
\end{prop}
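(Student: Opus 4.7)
The plan is to verify the claim case-by-case using the criteria collected in Lemma~\ref{lem:B_0} together with the character-theoretic test from Corollary~\ref{cor:W(B)=k}, applied to the character tables of all quasisimple covers of the sporadic simple groups and of $\tw{2}F_4(2)'$, which are available in the ATLAS and in the GAP Character Table Library.

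Several blocks fall to Lemma~\ref{lem:B_0} immediately. Principal blocks are handled by part~(a); every block with $|D| = p$ is handled by part~(b), since in that case $D = D_1$ so trivially $N_G(D) = N_G(D_1)$; and every $2$-block with $|D| = 4$ is handled by part~(c). Since the Sylow $2$-subgroups of the sporadic covers and of $\tw{2}F_4(2)'$ are non-cyclic of large order, cyclic $2$-blocks of defect $\geq 3$ do not arise among non-principal blocks, so the restriction of Theorem~\ref{thm:readingWfromIrr(B)} to odd primes causes no difficulty here.

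For the remaining blocks $\bB$, where $p$ is odd and the defect $\ell \geq 2$, I would proceed by explicit enumeration. Using the ATLAS together with the Brauer tree and block distribution data stored in GAP, for each cover $G$ of each of the $26$ sporadic simple groups and of $\tw{2}F_4(2)'$, and each odd prime $p$ such that $G$ admits a cyclic $p$-block of defect $\geq 2$, I would first test the structural shortcut $C_G(D) = C_G(D_1)$ by comparing centralizer orders of representatives of the relevant $p$-element classes; whenever this holds, Lemma~\ref{lem:B_0}(b) gives $W(\bB) \cong k$. In the remaining cases I would pick a non-exceptional character $\chi \in \Irr_K(\bB)$ (identifiable from the known planar embedded Brauer tree), read off the integers $\chi(u_i)$ for generators $u_i \in D_i \setminus D_{i-1}$ with $1 \leq i \leq \ell$, and check that they all share a common sign; by Corollary~\ref{cor:W(B)=k} this concludes $W(\bB) \cong k$. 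Because exceptional characters have the same value as the corresponding non-exceptional one on $p$-regular elements but differ only as a sum at the exceptional vertex, it is easy to pick out a genuinely non-exceptional $\chi$ for each block from the tabulated data.

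The main obstacle is not mathematical but organizational: one must ensure that every quasisimple cover of every sporadic group and of $\tw{2}F_4(2)'$ is considered, that every odd prime $p$ with cyclic defect $\geq p^2$ is identified (typically only a handful of primes per group, namely those with $p^2 \mid |G|$ and cyclic Sylow, together with a few isolated non-principal blocks), and that the chosen non-exceptional $\chi$ in each case has been correctly located on the Brauer tree. Since the full character tables, block distributions, and Brauer trees are available in the databases, the verification reduces to a finite inspection with no new ingredient beyond the criteria already established.
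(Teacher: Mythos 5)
Your plan is sound in the sense that it would eventually reach the correct conclusion, but it overlooks the single observation that collapses the entire case analysis: for every quasisimple cover $G$ of a sporadic simple group or of $\tw{2}F_4(2)'$, and every cyclic $p$-block of $kG$, the defect group in fact has order $p$, i.e.\ $D = D_1$. (This is a finite check against the ATLAS and the modular ATLAS data.) Once this is noted, Lemma~\ref{NG(D_1)}(b) alone disposes of all cases at once, and that is precisely the paper's one-line proof. Your more elaborate scheme---splitting off principal blocks, $2$-blocks of order~$4$, and then running the structural test $C_G(D) = C_G(D_1)$ followed by the character-theoretic criterion of Corollary~\ref{cor:W(B)=k} for odd primes with $\ell \geq 2$---is logically valid but covers contingencies that never arise here; after the first pass it would simply find that there are no blocks left to treat in the later stages. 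There is also a small unjustified assertion in your write-up: the fact that Sylow $2$-subgroups of sporadic covers are non-cyclic of large order does not by itself rule out non-principal cyclic $2$-blocks of defect $\geq 3$, since defect groups of non-principal blocks need not be Sylow; that claim must be verified from the block data, and it is subsumed by the stronger observation $D = D_1$ anyway.
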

\begin{proof}
In all cases $D=D_1$, hence the claim follows from Lemma~\ref{NG(D_1)}(b). 
\end{proof}

Next, we consider the case in which~$S$ is an alternating group.
\begin{prop}
\label{AlternatingGroups}
If~$S$ is the alternating group $\fA_n$ with $n\geq 5$, then $W(\bB)\cong k$.
\end{prop}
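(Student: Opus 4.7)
The quasisimple groups with simple quotient $\fA_n$ for $n \geq 5$ are $\fA_n$ itself, the double cover $2.\fA_n$, and, only when $n \in \{6,7\}$, the Schur covers $3.\fA_n$ and $6.\fA_n$. In line with the group-theoretical strategy described in the introduction, the plan is to prove that $C_G(D) = C_G(D_1)$ in each case, so that Lemma~\ref{NG(D_1)}(b) yields $W(\bB) \cong k$. A first reduction is to peel off the $p'$-part of $Z(G)$: letting $\lambda$ denote the central character of $\bB$ on $Z(G)$, the subgroup $Z_0 := \ker(\lambda) \cap O_{p'}(Z(G))$ is a normal $p'$-subgroup of~$G$ lying in $\ker(\bB)$, so Lemma~\ref{Inflation} gives a source-algebra equivalence between $\bB$ and its image in $k(G/Z_0)$. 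In particular, when $p \nmid |Z(G)|$ we reduce fully to the case $G = \fA_n$.

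For $G = \fA_n$ I would then invoke Nakayama's conjecture: a $p$-block of $\fS_n$ covering the given block of $\fA_n$ has, for $p$-weight $w$, a defect group isomorphic to a Sylow $p$-subgroup of $\fS_{pw}$, which is cyclic only when $w \leq 1$. Hence any cyclic defect group of a block of $\fA_n$ has order at most $p$, so $\elll = 1$, $D = D_1$, and Lemma~\ref{NG(D_1)}(b) concludes. The residual pairs coming from Step~1 are $(G,p) = (2.\fA_n, 2)$ for $n \geq 5$ and the four pairs $(G,p)$ with $G \in \{3.\fA_6, 6.\fA_6, 3.\fA_7, 6.\fA_7\}$ and $p = 3$. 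In each of these cases $O_p(Z(G))$ has order~$p$ and is contained in every defect group of a $p$-block of~$G$; if $D$ is cyclic then $D_1 = O_p(Z(G))$ is central in~$G$, so $C_G(D_1) = N_G(D_1) = G$. When $|D| = p$ we have $D = D_1$ and Lemma~\ref{NG(D_1)}(b) applies; when $(G,p) = (2.\fA_n, 2)$ and $|D| = 4$, Lemma~\ref{NG(D_1)}(c) is available.

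To finish I would argue that no cyclic defect group strictly larger than $p$ (and strictly larger than $4$ for $p=2$) occurs in these residual cases. For the four covers at $p = 3$, the Sylow $3$-subgroup of $G$ is extraspecial of order~$27$ and exponent~$3$, so its cyclic subgroups have order at most~$3$. For $(G,p) = (2.\fA_n, 2)$, the non-existence of cyclic $2$-defect groups of order $\geq 8$ would follow from the classification of $2$-spin blocks of the double cover $2.\fS_n$. The main obstacle is precisely this last point: since Theorem~\ref{thm:readingWfromIrr(B)} is unavailable at $p = 2$ and Lemma~\ref{NG(D_1)} covers only $|D| \leq 4$, one cannot argue from character values or from a uniform normalizer criterion and must instead rely on the structure of $2$-spin blocks to exclude cyclic $2$-defect groups of larger order in $2.\fA_n$ for $n \geq 5$.
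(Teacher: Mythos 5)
Your overall strategy matches the paper's (reduce to $\fA_n$ where possible, then use the weight/defect-group description of blocks of symmetric and alternating groups), but there are two genuine gaps.

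First, your opening reduction is not correct for faithful blocks. The subgroup $Z_0 = \ker(\lambda)\cap O_{p'}(Z(G))$ equals $Z(G)$ only when $\lambda$ is trivial on $O_{p'}(Z(G))$, i.e.\ when~$\bB$ is not faithful on that part of the center. In particular the spin blocks of $2.\fA_n$ for odd~$p$ do \emph{not} have $Z(G) = C_2$ in their kernel, so the sentence ``when $p\nmid|Z(G)|$ we reduce fully to the case $G=\fA_n$'' is false, and these blocks are silently dropped from your residual list (the same remark applies to faithful $2$-blocks of $3.\fA_n$ and blocks of $6.\fA_n$ faithful on one central factor). The paper handles the spin blocks of $2.\fA_n$ directly, via the bijection (from \cite{Olsson90}, \cite{OB}, \cite{Hum86}, \cite{Cab88}) between positive-weight spin blocks of $2.\fA_n$ and of a Schur cover $\wt{\fS}_n$, which preserves defect groups; cyclic defect groups force weight one and hence $|D| = p$, after which Lemma~\ref{NG(D_1)}(b) applies as in the non-faithful case.

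Second, the case $(G,p) = (2.\fA_n,2)$ is left as an admitted gap in your proposal. The closing tool is elementary and does not require any structural input on $2$-spin blocks: by \cite[V, Lemma~$4.5$]{Feit}, moding out the central $2$-subgroup $Z := O_2(Z(G))$ of order~$2$ induces a bijection between $2$-blocks of $2.\fA_n$ and $2$-blocks of $\fA_n$ under which a defect group $D$ of $\bB$ maps to the defect group $D/Z$ of the corresponding block of $\fA_n$. Since $D$ is cyclic, so is $D/Z$, and since the only cyclic $2$-defect group of a block of $\fA_n$ is trivial (as you argued), $D = Z$ has order~$2$; thus $D=D_1$ and Lemma~\ref{NG(D_1)}(b) finishes. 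Your structural argument for the exceptional covers at $p=3$ (extraspecial Sylow $3$-subgroup of exponent~$3$, forcing $|D| \le 3$) is a correct and pleasant alternative to the paper's appeal to~\cite{ModAT}, but it does not compensate for the two omissions above.
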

\begin{proof}
First assume that $G=\fA_n$ ($n\geq 5$). The $p$-blocks of $\fA_n$, their 
weights and defect groups are, for example, described in 
\cite[Subsection~$6.1$]{JK81} and \cite[Section~$4$]{Olsson90}.

Let $\wh\bB$ be a block of~$\fS_n$ of weight~$w$ covering~$\bB$. A defect
group of~$\wh\bB$ is conjugate to a Sylow $p$-subgroup of $\fS_{wp}$; see 
\cite[Theorem 6.2.45]{JK81}. Thus~$D$ is isomorphic to a Sylow $p$-subgroup 
of~$\fA_{pw}$. If $p$ is odd, this implies $w = 1$ and $D = D_1$, so the claim 
follows from Lemma~\ref{NG(D_1)}(b). As the Sylow $2$-subgroups of $\fA_{2w}$ 
are either trivial or non-cyclic, $\fA_n$ does not have $2$-blocks with 
non-trivial cyclic defect groups. 

Next assume that $|Z(G)|=2$, i.e.\ $G = 2.\fA_n$. We let $\wt{\fS}_n$ denote a
Schur covering group of~${\fS}_n$ containing~$G$ as a subgroup of index~$2$.
For a description of the $p$-blocks of $2.\fA_n$ 
and $\wt{\fS}_n$ we refer to \cite{Olsson90}, \cite{OB}, \cite{Hum86} 
and~\cite{Cab88}.

Suppose that~$p$ is odd. In this case, it only remains to consider the faithful 
blocks, i.e.\ the spin blocks. There is a bijection between the spin blocks of 
$2.\fA_n$ of positive weight and the spin blocks of $\wt{\fS}_n$ of positive 
weight (given by covering), which preserves defect groups. Again, the spin 
blocks of $\wt{\fS}_n$ with cyclic defect groups are precisely the blocks of 
weight $w=1$, which in turn have defect $1$, since the defect groups are 
isomorphic to the Sylow $p$-subgroups of $\wt{\fS}_{pw}$. Hence the claim 
follows from Lemma~\ref{NG(D_1)}(b). 

If $p=2$, then there is a bijection between the $2$-blocks of $2.\fA_n$ and the 
$2$-blocks of $\fA_n$, where defect groups are obtained by moding out the 
center; see \cite[V, Lemma~$4.5$]{Feit}. As we have already observed above, the 
only $2$-blocks of~$\fA_n$ with cyclic defect groups are of defect~$0$. Hence 
$D \leq Z(G)$ and the claim follows from  Lemma~\ref{NG(D_1)}(b). 

Finally, let~$G$ be an exceptional covering group of $\fA_n$ for $n \in \{6,7\}$. 
If $p \geq 5$, we have $D=D_1$, as $p^2 \nmid |G|$. For $p\in\{2,3\}$, we also
get $D = D_1$ using~\cite{ModAT}. Hence the claim follows from 
Lemma~\ref{NG(D_1)}(b) in all cases.
\end{proof}

From now on, we assume that~$S$ is a simple group of Lie type.
\begin{prop}
\label{DefiningCharacteristic}
If~$S$ is a simple group of Lie type in characteristic~$p$,
then $W(\bB)\cong k$.
\end{prop}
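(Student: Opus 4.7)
The plan is to show that in defining characteristic the defect group $D$ must have order~$p$, so that $D = D_1$ and Lemma~\ref{NG(D_1)}(b) applies directly. The two inputs I will need are the fact that the Schur multiplier of~$S$ has order coprime to~$p$ and a theorem of Humphreys on defect groups in defining characteristic.

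Since $S$ is a simple group of Lie type in defining characteristic~$p$, the standard tables of Schur multipliers give $p \nmid |Z(G)|$. Consequently, the quotient map $G \twoheadrightarrow S$ identifies the Sylow $p$-subgroups of~$G$ and~$S$. Next I would invoke Humphreys' classical result (and its extension to twisted types and to arbitrary quasisimple covers via Clifford theory, using that the kernel of the simply connected cover $G_{\mathrm{sc}} \twoheadrightarrow G$ is a $p'$-group): every $p$-block of~$G$ has defect group either trivial or a full Sylow $p$-subgroup. Since~$D$ is non-trivial and cyclic, $D$ is therefore a cyclic Sylow $p$-subgroup of~$G$, hence also of~$S$.

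I would then classify the simple groups of Lie type in defining characteristic~$p$ with cyclic Sylow $p$-subgroup. For such a group over $\mathbb{F}_q$, with $q$ a power of~$p$, the Sylow $p$-subgroup is a non-trivial finite unipotent group of order $q^N$, where~$N$ is the number of positive roots of the underlying root system (with the usual modification for twisted types). For rank $\geq 2$ root systems and for the Suzuki, Steinberg and Ree series, these unipotent subgroups are known to be non-cyclic; for rank one, the Sylow subgroup is $\mathbb{F}_q^+$, which is cyclic iff $q = p$. This isolates $S \cong \mathrm{PSL}_2(p)$, forcing $|D| = p$, whence $D = D_1$ and $W(\mathbf{B}) \cong k$ by Lemma~\ref{NG(D_1)}(b).

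The main obstacle will be pinning down the correct form of Humphreys' theorem for the quasisimple group~$G$ rather than just for~$S$ or~$G_{\mathrm{sc}}$, and giving clean references for the Sylow structure in the twisted and in the Suzuki--Ree cases. Both are well-documented in the literature, so no new ideas should be required; the argument is essentially a citation plus an elementary case check.
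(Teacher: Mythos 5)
Your overall strategy mirrors the paper's: invoke Humphreys' theorem (defect groups in defining characteristic are trivial or full Sylow $p$-subgroups), observe that a cyclic Sylow $p$-subgroup forces $S \cong \PSL_2(p)$, and conclude via Lemma~\ref{NG(D_1)}(b). However, your reduction step contains a genuine error. You assert that ``the standard tables of Schur multipliers give $p \nmid |Z(G)|$'' and that ``the kernel of the simply connected cover $G_{\mathrm{sc}} \twoheadrightarrow G$ is a $p'$-group.'' Both statements fail for the simple groups of Lie type with exceptional Schur multiplier: when $S$ is viewed in defining characteristic~$p$, the exceptional part of the multiplier of $S$ is always a non-trivial $p$-group (e.g.\ $G_2(4)$ with $p=2$ has multiplier $\IZ_2$, $\Omega_7(3)$ with $p=3$ has multiplier $\IZ_6$ whose $3$-part is $\IZ_3$, $\PSL_3(4)$ with $p=2$ has $2$-part $\IZ_4\times\IZ_4$, etc.). For such~$S$ there are quasisimple covers~$G$ with $p\mid|Z(G)|$, and these~$G$ are \emph{not} central quotients of $G_{\mathrm{sc}}$, so your Clifford-theoretic transfer of Humphreys' theorem from $G_{\mathrm{sc}}$ to $G$ breaks down at exactly these groups. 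And this is not a phantom case: for instance $2.G_2(4)$ really does have a $2$-block with cyclic central defect group of order~$2$ (coming from a defect-$0$ block of $G_2(4)$ such as the Steinberg block).

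The paper's workaround, which you would need to reproduce, is to pass first to $G/O_p(Z(G))$. Since $O_p(Z(G))\leq O_p(G)\leq D$, the image $D/O_p(Z(G))$ is a defect group of the dominated block of $G/O_p(Z(G))$, and $G/O_p(Z(G))$ \emph{is} a central $p'$-quotient of the largest perfect central $p'$-extension $\hat G$ of~$S$, which equals $\hat{\bG}^F$ for a simply connected $\hat{\bG}$ (after excluding $S\cong\fA_6$, which was already handled with the alternating groups). Then Humphreys applies to~$\hat G$, and one obtains a dichotomy which is slightly weaker than your ``$|D|=p$'': either $D/O_p(Z(G))$ is trivial, in which case $D\leq Z(G)$ and $W(\bB)\cong k$ by Lemma~\ref{NG(D_1)}(b) even though $|D|$ may exceed~$p$; or $\hat G$ has a cyclic Sylow $p$-subgroup, forcing $S\cong\PSL_2(p)$ with $p\geq 5$, whence $O_p(Z(G))=1$, $|D|=p$ and again Lemma~\ref{NG(D_1)}(b) applies. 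Your proposal misses the first branch of the dichotomy entirely, and the ``obstacle'' you flag about the correct form of Humphreys' theorem for $G$ is not a matter of locating a reference: the transfer to $G$ genuinely requires this extra quotienting step, and the conclusion for those~$G$ is $D\leq Z(G)$, not $|D|=p$.
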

\begin{proof}
Let~$\hat{G}$ denote the largest perfect central $p'$-extension of~$S$. 
Then $G/O_p(Z(G))$ is a central quotient of~$\hat{G}$. Now~$D/O_p(Z(G))$ is a
defect group of a $p$-block of $G/O_p(Z(G))$; see \cite[V, Lemma~$4.5$]{Feit}.
Thus $D/O_p(Z(G))$ is a defect group of a $p$-block of~$\hat{G}$ by 
Lemma~\ref{Inflation}.

The claim follows from Proposition~\ref{AlternatingGroups}, if~$S$ is isomorphic 
to an alternating group. We thus exclude the case $S \cong \PSL_2(9) \cong 
\Sp_4(2)' \cong \fA_6$. Then~$\hat{G}$ is of 
the form $\hat{G} = \hat{\bG}^F$ for a connected reductive algebraic 
group ~$\hat{\bG}$ of characteristic~$p$ and a suitable Steinberg morphism~$F$ 
of~$\hat{\bG}$; see \cite[Definition~$6.1.1$(a) and Table~$6.1.3$]{Gor}.
By \cite[Theorem in Section~$8.5$]{Hum06}, the defect groups of the 
$p$-blocks of~$\hat{G}$ are either trivial or the Sylow $p$-subgroups 
of~$\hat{G}$. Thus $D \leq Z(G)$ or~$\hat{G}$ has a 
cyclic Sylow $p$-subgroup. In the former case we obtain our assertion from
Lemma~\ref{NG(D_1)}(b). In the latter case, $S \cong \PSL_2(p)$ and $p \geq 5$.
But then $D = D_1$ and the claim once more follows from Lemma~\ref{NG(D_1)}(b).
\end{proof}

We next consider those cases, where~$S$ has an exceptional Schur multiplier.

\begin{prop}
\label{QuasisimpleLieTypeExceptionalMultiplier}
Assume that~$S$ is isomorphic to one of the~$17$ simple groups of Lie type with 
an exceptional Schur multiplier, listed in \cite[Table~$6.1.3$]{Gor}. (These 
groups constitute~$16$ isomorphism types, as $\PSL_2(9) \cong \Sp_4(2)'$ is 
listed twice.) Then $W( \mathbf{B} ) \cong k$.
\end{prop}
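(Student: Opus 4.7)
The plan is to perform a case-by-case verification based on the explicit finite list of groups in~\cite[Table~$6.1.3$]{Gor}, leveraging the GAP character table library. First, $\PSL_2(9) \cong \fA_6$ is already handled by Proposition~\ref{AlternatingGroups}, and for each~$S$ the case where~$p$ equals the defining characteristic of~$S$ is disposed of by Proposition~\ref{DefiningCharacteristic}. What remains is, for each quasisimple cover~$G$ of the remaining~$S$ and each prime $p$ in cross-characteristic, to enumerate the $p$-blocks $\mathbf{B}$ of~$G$ with non-trivial cyclic defect group~$D$, using the explicit block distributions in GAP.

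Next, I would invoke the sufficient criteria of Lemma~\ref{NG(D_1)}: the principal block is trivial by~(a); whenever $|D|=p$ one has $D = D_1$ and hence $W(\mathbf{B})\cong k$ by~(b); and $p=2$ with $|D|=4$ is covered by~(c). An inspection of the block data should show that these criteria already dispose of the vast majority of cases, leaving only a short explicit list of non-principal blocks with $p$ odd and $|D|\geq p^2$.

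For each of the residual cases, the approach is to apply Corollary~\ref{cor:W(B)=k}: it suffices to exhibit one non-exceptional character $\chi\in\Irr_K(\mathbf{B})$ whose values $\chi(u_1),\ldots,\chi(u_\ell)$ at generators of the subgroup chain $D_1 < \cdots < D_\ell = D$ all share a common sign. The non-exceptional characters correspond to the edges of the Brauer tree $\sigma(\mathbf{B})$ and can be identified from the block distribution and decomposition matrix data in the GAP character table library; the chain $D_1 < \cdots < D_\ell$ and representative generators $u_i \in D_i\setminus D_{i-1}$ are determined from the $p$-power maps on the corresponding conjugacy classes.

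The main obstacle, and the only delicate point, is the bookkeeping across central extensions: one must correctly match conjugacy classes of $p$-elements in the cover~$G$ to the subgroup chain of~$D$, distinguish faithful from non-faithful blocks, and verify that the chosen~$\chi$ is genuinely non-exceptional (rather than one of the multiplicity in the exceptional vertex). Since the groups in question are of moderate order and their ordinary character tables, together with block and Brauer tree data, are fully implemented in GAP and documented in the \textsc{Atlas}, the verification reduces to a finite, mechanical computation whose outcome should confirm $W(\mathbf{B})\cong k$ in every remaining case.
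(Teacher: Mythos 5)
Your plan is sound and would terminate correctly, but it carries an extra stage that the paper's own proof shows to be vacuous. After reducing to cross-characteristic and non-exceptional-covering cases (the same first two steps as the paper), you anticipate a residual list of non-principal blocks with $p$ odd and $|D|\geq p^2$ that would need the character-theoretic criterion of Corollary~\ref{cor:W(B)=k}. The paper instead verifies, by direct inspection of the block and defect-group data in \cite{GAP04} and \cite{ModAT}, that for every one of the $17$ groups and every cross-characteristic prime the non-trivial cyclic defect groups satisfy $|D|=p$ when $p$ is odd, and $|D|\in\{2,4\}$ when $p=2$ (the unique exception with $|D|=4$ arising for $G=\Spin_7(3)$). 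Hence Lemma~\ref{NG(D_1)}(b) and (c) alone already dispose of every case, and no Brauer trees, decomposition matrices, $p$-power maps, or sign computations on character values are ever needed. The gain of the paper's approach is that it reduces a potentially delicate Brauer-tree and exceptional-vertex bookkeeping (which you flag as the main obstacle) to a much coarser check on defect group orders. Your approach would also yield the result, since the ``residual list'' would turn out to be empty, but you would be doing work to prepare for an analysis that never has to be run; and the place where the argument could genuinely go wrong in your version — misidentifying the non-exceptional characters or the subgroup chain of $D$ across central extensions — is avoided entirely in the paper by never descending to that level of detail.
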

\begin{proof}
As in the proof of Proposition~\ref{DefiningCharacteristic} we exclude the case
$S \cong \Sp_4(2)'$.

Suppose first that~$p$ is odd. We claim that then $|D| = p$, which implies
our assertion using Lemma~\ref{NG(D_1)}(b). If $S \cong {^2\!E}_6(2)$ and 
$p \leq 5$, the claim can be checked with~\cite{GAP04}, and in all other cases 
with \cite{ModAT}. 

Now assume that $p = 2$. If~$S$ has characteristic~$2$, the assertion follows
from Proposition~\ref{DefiningCharacteristic}. Otherwise,~$S$ is isomorphic to
$\PSU_4(3)$, $\Omega_7(3)$ or $G_2(3)$. In the first and third case, we find
$|D| = p$ by inspecting \cite{ModAT}. It remains to consider the
case $S \cong \Omega_7(3)$ which we handled with~\cite{GAP04}. We distinguish
the cases $G = S$ and ${G = \Spin_7( 3 ) = 2.S}$. If $G = S$,
there is a unique $2$-block of~$G$ with non-trivial cyclic defect group, and 
this has order $2$. If $G = \Spin_7( 3 )$, there are three $2$-blocks of~$G$
with non-trivial cyclic defect groups, two of order~$2$ and one of order~$4$.
The claim follows from Lemma~\ref{NG(D_1)}(b)(c).
\end{proof}

\subsection{Groups of Lie type and large primes}
In the considerations below we adopt the following common
convention. Let $\varepsilon \in \{ 1, -1 \}$. Then $\PSL^\varepsilon_n(q)$
denotes the projective special linear group $\PSL_n(q)$, if $\varepsilon = 1$,
and the projective special unitary group $\PSU_n(q)$, if $\varepsilon = -1$.
Analogous conventions are used for $\SL^\varepsilon_n(q)$ and
$\GL^\varepsilon_n(q)$.

\begin{prop}
\label{QuasisimpleLieTypeNonExceptionalMultiplier}
Let~$G$ denote a quasisimple group of Lie type with simple quotient~$S$.
Assume that~$S$ does not have an exceptional Schur multiplier and let 
$\hat{G}$ denote the universal covering group of~$G$. Let~$p$
be an odd prime different from the defining characteristic~$r$ of~$S$. 

Let~$\mathbf{B}$ be a $p$-block of~$G$ with a non-trivial cyclic defect
group~$D$ of order $p^l$. Then $C_G( D ) = C_G( D_i )$ and 
$N_G( D ) = N_G( D_i )$ for all $1 \leq i \leq l$ under any of the conditions 
in~{\rm (a)} or~{\rm (b)} below. In particular, $W( \mathbf{B} ) \cong k$ if 
these conditions are satisfied.

{\rm (a)} Suppose that~$S$ is a classical group. Then~$\hat{G}$ is one of 
$\SL_n(q)$ with 
$n \geq 2$, $\SU_n(q)$ with $n \geq 3$, $\Sp_n(q)$ with $n \geq 4$ even, 
$\Spin_n(q)$ with $n \geq 7$ odd, or $\Spin^{\pm}_n(q)$ with $n \geq 8$ 
even.

Let~$d$ denote the order of~$q$ modulo~$p$. If~$\hat{G}$ is one of $\SL_n(q)$, 
$\Sp_n(q)$ or $\Spin^-_n(q)$, assume that $pd > n$. If $\hat{G} = \Spin_n( q )$ 
with $n$ odd, assume that $pd > n - 1$. If $\hat{G} = \Spin^+_n(q)$ with~$n$ 
even, assume that $pd > n - 2$. Finally, if $\hat{G} = \SU_n(q)$, assume that 
$pd > 2n$.

{\rm (b)} Suppose that~$S$ is an exceptional group of Lie type, including
the Suzuki and Ree groups. Assume that 
$p > 3$, and if~$S$ is of type~$E_8$ assume that $p > 5$.
\end{prop}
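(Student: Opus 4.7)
The plan is to verify the hypotheses of Lemma~\ref{LargeEll} (or of its generalization Corollary~\ref{LargeEllGen}), which will yield $C_G(D) = C_G(D_i)$ for every $1 \leq i \leq l$. Once this centralizer equality is established, the normalizer equality $N_G(D) = N_G(D_i)$ is formal: by Lemma~\ref{AbelianRadicalSubgroup}, the abelian radical subgroup~$D$ satisfies $N_G(D) = N_G(C_G(D))$ and $D = O_p(C_G(D))$, and since $D_i$ is characteristic in~$D$ (as the unique subgroup of order~$p^i$), any element normalizing~$D_i$ also normalizes $C_G(D_i) = C_G(D)$ and hence~$D$. The final conclusion $W(\mathbf{B}) \cong k$ then follows immediately from Lemma~\ref{NG(D_1)}(b).

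To apply Lemma~\ref{LargeEll}, I realize $\hat{G}$ as $\mathbf{G}_0^F$ for a connected reductive algebraic group $\mathbf{G}_0$ over $\bar{\mathbb{F}}_r$ equipped with a Steinberg morphism~$F$. Under the numerical hypotheses on~$p$ and~$d$ one checks that the kernel of the projection $\hat{G}\twoheadrightarrow G$ has order prime to~$p$, so that the passage between $\hat{G}$ and~$G$ is controlled by Lemma~\ref{CentralExtensionEtc}; for instance in type $A_{n-1}$ the centre of $\hat{G}$ has order $\gcd(n,q-1)$, and a prime~$p$ dividing it would force $d=1$ and hence $pd = p \leq n$, contradicting the hypothesis $pd>n$. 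For the classical types of~(a), $\mathbf{G}_0$ is simply connected ($\SL_n$, $\Sp_n$, or $\Spin_n^{\pm}$) and typically has non-trivial finite centre; to dispense with the centre condition in Lemma~\ref{LargeEll} I use a regular embedding $\mathbf{G}_0 \hookrightarrow \mathbf{G}$ into a connected reductive group~$\mathbf{G}$ with connected centre (namely $\GL_n$, $\CSp_n$, or $\mathrm{CSpin}_n$, respectively). Both $Z(\mathbf{G})/Z(\mathbf{G})^\circ$ and $Z(\mathbf{G}^\ast)/Z(\mathbf{G}^\ast)^\circ$ are then trivial, so that Lemma~\ref{LargeEll} applies in~$\mathbf{G}$; intersecting the resulting identity with $\mathbf{G}_0^F$ transfers the centralizer equality back to~$\hat{G}$ and hence to~$G$.

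Verifying condition~$(\ast)$ for $(\mathbf{G},F)$ reduces to a finite cyclotomic check. The order polynomial of $(\mathbf{G},F)$ factors as a product of cyclotomic polynomials $\Phi_{d'}(x)$, with $d'$ ranging over an explicit finite set $I$ depending only on the type: for example $I=\{1,\ldots,n\}$ for $\GL_n$, a subset of $\{1,\ldots,2n\}$ for $\GU_n$, a subset of $\{1,\ldots,n\}$ for $\Sp_n$ and $\Spin^-_n$, of $\{1,\ldots,n-1\}$ for $\Spin_n$ with $n$ odd, and of $\{1,\ldots,n-2\}$ for $\Spin^+_n$. A prime $p\nmid q$ divides $\Phi_{d'}(q)$ precisely when $d' = dp^k$ for some $k\geq 0$, where~$d$ is the multiplicative order of~$q$ modulo~$p$. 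The numerical hypothesis in~(a) then forces every index $dp^k$ with $k\geq 1$ to lie outside~$I$, leaving~$d$ as the unique element of~$I$ satisfying $p\mid \Phi_d(q)$, which is exactly condition~$(\ast)$.

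For the exceptional groups in~(b) the same strategy applies, using the tabulated order polynomials (see e.g.\ \cite{BrouMa}). The hypothesis $p>3$ simultaneously rules out the bad primes~$2$ and~$3$ of $G_2$, $F_4$, $E_6$ and $E_7$ and the $p$-torsion of $Z(\mathbf{G}_0)$, which is $\mu_3$ for~$E_6$, $\mu_2$ for~$E_7$, and trivial in the remaining cases, so that one can apply Lemma~\ref{LargeEll} directly to~$\mathbf{G}_0$ without invoking a regular embedding; for~$E_8$ the strengthened bound $p>5$ additionally excludes the bad prime~$5$. The main effort will reside in the case-by-case cyclotomic bookkeeping for the classical series, and particularly in accounting for the asymmetry between the bounds $n$, $n-1$, $n-2$ and $2n$, which reflects the differing distributions of $\Phi_d$'s between the $D$, $B$, and $\tw{2}A$ order polynomials.
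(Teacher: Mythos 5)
Your overall strategy for the non-twisted and $\tw{3}D_4$, $\tw{2}E_6$ cases matches the backbone of the paper's argument: reduce to the universal covering group via Lemma~\ref{CentralExtensionEtc}(a), realize it as $\mathbf{G}^F$, verify condition~$(\ast)$ by cyclotomic bookkeeping on the order polynomial using \cite{MalleHeight0}, then invoke Lemma~\ref{LargeEll} and deduce the normalizer equality from Lemma~\ref{AbelianRadicalSubgroup}. Your index-set accounting for the classical series is correct and explains the bounds $n$, $n-1$, $n-2$, $2n$. However, there is a genuine gap in your treatment of part~(b): the statement explicitly includes the Suzuki and Ree groups $\tw{2}B_2(2^{2m+1})$, $\tw{2}G_2(3^{2m+1})$ and $\tw{2}F_4(2^{2m+1})$, and for these the real number~$q$ attached to $(\mathbf{G},F)$ is \emph{not} an integer. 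Hence condition~$(\ast)$ is not even defined, and Lemma~\ref{LargeEll} (whose hypotheses require~$q$ to be an integer) is simply inapplicable; the bound $p > 3$ does not exclude any of these groups. The paper disposes of them first, with entirely separate arguments: for $\tw{2}B_2$ and $\tw{2}G_2$ it cites Suzuki~\cite{Suzuki} and Ward~\cite{WardRee} for the fact that centralizers of nontrivial $p$-elements are maximal tori, and for $\tw{2}F_4$ it performs a case analysis on the eleven maximal tori and the non-regular semisimple classes using Shinoda's tables~\cite{shino}. Your proof as written does not cover these families.

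A secondary point is a divergence of technique for the classical groups. You propose a regular embedding $\mathbf{G}_0 \hookrightarrow \mathbf{G}$ into a group with connected centre to sidestep the hypothesis $p \nmid |Z(\mathbf{G})/Z(\mathbf{G})^\circ|$ of Lemma~\ref{LargeEll}, whereas the paper applies Lemma~\ref{LargeEll} directly to the simply connected group and asserts all its hypotheses hold. Your variant is reasonable but not self-contained: Lemma~\ref{LargeEll} also requires~$D$ to be a \emph{radical} $p$-subgroup of the ambient fixed-point group, and once you enlarge to $\mathbf{G}^F$ you must re-establish that. Lemma~\ref{CentralExtensionEtc}(c) handles the case $p \nmid [\mathbf{G}^F\colon\!\mathbf{G}_0^F]$, but when $d = 1$ this index can be divisible by~$p$, and one needs the extra observation that $d=1$ together with $pd>n$ forces $p>n$, so that $p\nmid |Z(\mathbf{G}_0)/Z(\mathbf{G}_0)^\circ|$ and the direct application to $\mathbf{G}_0$ already works; this step is missing from your write-up.
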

\begin{proof}
It suffices to prove the first assertion, i.e.\ that 
$C_G( D ) = C_G( D_i )$ for all $1 \leq i \leq l$.
Indeed, Lemma~\ref{NG(D_1)}(b) then implies that $W( \mathbf{B} ) \cong k$.
Moreover, $N_G( D ) \leq N_G( D_i ) \leq N_G( C_G( D_i ) ) = N_G( C_G( D ) ) 
= N_G( D )$ for all $1 \leq i \leq l$, where the last equality arises from
Lemma~\ref{AbelianRadicalSubgroup}. 

Let us now prove the first assertion.
To begin with, assume that~$S$ is a Suzuki group $S = {^2\!B}_2(2^{2m+1})$ for 
some $m \geq 2$, in which case $G = S$. It follows from the proof of
\cite[Theorem~$4$]{Suzuki}, that the centralizer of every non-trivial
$p$-element of~$S$ is a maximal torus. This implies our assertion. An analogous
argument applies if~$S$ is a Ree group $S = {^2G}_2(3^{2m+1})$ for some 
$m \geq 1$, using \cite{WardRee}. If $S = {^2\!F}_4(2^{2m+1})$ for some 
$m \geq 1$, we again have $G = S$. The conjugacy classes of~$S$ are determined 
in~\cite{shino}. Let $D = \langle t \rangle$; then~$t$ is a semisimple element 
of~$S$ and, by assumption, $3 \nmid |t|$. Now~$t$ is contained in one of the 
maximal tori $T(i)$, $i = 1, \ldots, 11$ of~$S$, whose structure is given in
\cite[p.~$19$]{shino}. Suppose first that~$t$ is a regular element of $T(i)$,
i.e.\ $C_S( t ) = T( i )$. As~$D$ is a radical $p$-subgroup of~$G$, the Sylow
$p$-subgroup of $T(i)$ is cyclic. Hence $i \in \{ 5, 9, 10, 11 \}$. Indeed, a
$p$-element in $T(2) \cup T(3) \cup T(4)$ is not regular, and the
Sylow $p$-subgroups of $T(1)$, $T(6)$, $T(7)$ or $T(8)$ are not cyclic.
From \cite[Table~IV]{shino} we conclude that every non-trivial power of~$t$
also is regular in $T(i)$, proving our claim in the first case. Now assume
that~$t$ is not a regular element. Then~$t$ is conjugate to one of~$t_j$,
with $j \in \{ 1, 2, 5, 7, 9 \}$ in the notation of \cite[Table~IV]{shino}.
This table then shows that every non-trivial power of~$t$ has the same
centralizer as~$t$.

We will henceforth assume that~$S$ is not one of the Suzuki or Ree groups. By 
our assumption,~$p$ does not divide the order of the Schur multiplier of~$S$. 
Thus, by Lemma~\ref{CentralExtensionEtc}(a), we may assume that $G = \hat{G}$ is 
the universal central extension of~$S$. 
Thus there is a simple, simply connected algebraic group $\mathbf{G}$ over 
$\bar{\mathbb{F}}_r$, and a Steinberg morphism~$F$ of~$\mathbf{G}$, such that 
$G = \mathbf{G}^F$. Moreover, $\mathbf{G}$ and~$F$ may be chosen such that the
number~$q$ associated to $(\mathbf{G},F)$ as in 
\cite[Proposition~$1.4.19$(b)]{GeMa} is an integer. In fact it agrees with the 
number~$q$ introduced in the hypotheses~(a). Now let~$d$ denote the order of~$q$
modulo~$p$. Then $p \mid \Phi_d(q)$, and thus $\Phi_d$ divides the order 
polynomial of the generic finite reductive group~$\mathbb{G}$ associated with 
$(\mathbf{G},F)$; 
see \cite[Corollary~$5.4$]{MalleHeight0}. Moreover, if $d'$ is an integer such 
that $p \mid \Phi_{d'}(q)$, then $d' = p^jd$ for some non-negative integer~$j$; 
see \cite[Lemma~$5.2$(a)]{MalleHeight0}. The order polynomial of~$\mathbb{G}$ 
can be read off from the order formulae for~$\hat{G}$ given, for example in 
\cite[Table~$1.3$]{GeMa}. We conclude that all hypotheses of 
Lemma~\ref{LargeEll} on~$\mathbf{G}$,~$F$ 
and~$p$ are satisfied. Our claim follows from Lemma~\ref{LargeEll}.
\end{proof}

\section*{Acknowledgements}

The authors thank Olivier Dudas, Meinolf Geck, Markus Linckelmann, Michael
Livesey, Frank L{\"u}beck and Gunter Malle for helpful discussions 
and suggestions.

%--------------------------------------------------------------------------------------------------------
%------------------------------------------ END                      --------------------------------------
%--------------------------------------------------------------------------------------------------------
\end{document}